\newcommand{\nats}                  {{\mathbb N}}
\newcommand{\calV}{\mathcal{V}}
\newcommand{\BMt} {\mathbb{B}}
\newcommand{\CMt} {\mathbb{C}}
\newcommand{\DMt} {\mathbb{D}}
\newcommand{\Mt} {\mathbb{M}}
\newcommand{\MPt} {\mathbb{MP}}
\newcommand{\conn}{{\copyright}}
\theoremstyle{definition}
\newtheorem{definition}{\vspace{1mm}Definition}[section]
\theoremstyle{plain}
\newtheorem{lemma}[definition]{\vspace{1mm}Lemma}
\newtheorem{theorem}[definition]{\vspace{1mm}Theorem}
\newtheorem{proposition}[definition]{\vspace{1mm}Proposition}
\newtheorem{example}[definition]{\vspace{1mm}Example}
\newtheorem*{bla1}{Example~\ref{pnegpq}, revisited}
\newtheorem*{bla2}{Example~\ref{pnegpnegq}, revisited}
\newtheorem*{bla3}{Example~\ref{clun}, revisited}
\newtheorem*{bla4}{Example~\ref{agata}, revisited}
\newtheorem*{bla5}{Example~\ref{nelson}, revisited}
\newtheorem*{bla6}{Example~\ref{swap}, revisited}
\newtheorem*{bla7}{Example~\ref{L53}, revisited}
\newcommand{\der}    						  {\vartriangleright}
\newcommand{\tuple}[1]                         {{\langle #1\rangle}}
\DeclareMathOperator*{\sub}{\mathsf{sub}}
\DeclareMathOperator*{\var}{\mathsf{var}}
\DeclareMathOperator*{\prfx}{\mathsf{prfx}}
\DeclareMathOperator*{\inst}{\mathsf{inst}}
\newcommand{\ou}          {\vee}
\newcommand{\e}          {\wedge}
\newcommand{\Val}{\textrm{Val}}
\newcommand{\ignore}[1]{}
\newcommand{\Ax}             {\mathsf{Ax}}
\newcommand{\Int}             {\mathsf{Int}}
\newcommand{\Exp}             {\mathsf{Exp}}
\newcommand{\Norm}             {\mathsf{Norm}}
\newcommand{\Tm}             {\mathsf{Tm}}
 \date{ }
\title{On axioms and rexpansions
\footnote{
This research was funded by FCT/MCTES through national funds and when applicable co-funded EU funds under the project UIDB/EEA/50008/2020.
Work done under the scope of the CaCTus initiative of SQIG at Instituto de Telecomunica\c{c}\~oes.
}
}
\author{Carlos Caleiro and S\'ergio Marcelino\\
{\tt \{ccal,smarcel\}@math.tecnico.ulisboa.pt} \\
{SQIG - Instituto de Telecomunica\c c\~oes}\\
{Dep. Matem\'atica - Instituto Superior T\'ecnico}\\
{Universidade de Lisboa, Portugal}}
\begin{document}

\maketitle

\begin{abstract}
We study the general problem of strengthening the logic of a given (partial) (non-deterministic) matrix with a set of axioms, using the idea of rexpansion. 
We obtain two characterization methods: a very general but not very effective one, and then an effective method which only applies under certain restrictions on the given semantics and the shape of the axioms. We show that this second method covers a myriad of examples in the literature. 
Finally, we illustrate how to obtain analytic multiple-conclusion calculi for the resulting logics.
\end{abstract}

\section{Introduction}
The work reported in this paper has three underlying aims.

First, and foremost, on a higher-level reading, this paper is an acclamation of the modularization power enabled by \emph{non-deterministic matrices (Nmatrices)}, as proposed and developed by Arnon Avron, along with his coauthors and students over the past 15 years~\cite{avlev05,avron2005,avronnegations,avron2007,AvronBK07,Avron,Avron2012atLICS,avroncutfree2013,rexpansions}, and used by many others~\cite{wollic,wollic19,taming,Baaz2013,soco,synt,coniglioswap}  when seeking for a clear semantic rendering of logics resulting from strengthening a given base logic. 

Secondly, in the technical developments we propose, this paper can be seen as an application of the ideas behind \emph{rexpansions}~\cite{rexpansions} of Nmatrices, in the form of a generalization of the systematic method put forth in~\cite{taming} for obtaining modularly a suitable semantics for a given logic strengthened with additional axioms (and new unary connectives). Expectedly, the method may yield in general a \emph{partial non-deterministic matrix (PNmatrix)}~\cite{Baaz2013}, partiality being a feature that adds to the conciseness of Nmatrices but which is known to contend with \emph{analyticity}.

Last but not least, this paper is an opportunity for putting into practice the techniques developed in~\cite{synt,wollic19} for obtaining an analytic multiple-conclusion calculus for the logic defined by any finite PNmatrix (under a reasonable expressiveness proviso).
This is in contrast with comparable results for sequent-like calculi~\cite{Baaz2013,taming}, for which partiality seems to devoid them of a usable (even if generalized) subformula property capable of guaranteeing analyticity (and elimination of non-analytic cuts).\smallskip

The paper is organized as follows. In Section~\ref{sec2}, we recall (or suitably adapt) the necessary notions about logics, their syntax and semantics. Section~\ref{sec3} presents two methods for using rexpansions in order to obtain semantic characterizations of the strenghtening with additional (schema) axioms $\Ax$ of the logic of a given PNmatrix $\Mt$. The first method, presented in Subsection~\ref{sec31}, is completely general but unfortunately produces an infinite PNmatrix even when a finite one would be available. In order to overcome this drawback, in Subsection~\ref{sec32}, we present another more economic method, generalizing~\cite{taming}, which, under suitable requirements, always provides a finite PNmatrix when starting from finite $\Mt$ and $\Ax$. Section~\ref{sec4} is devoted to illustrating the application of the method of Subsection~\ref{sec32} to some meaningful examples.
Then, in Section~\ref{sec5}, we show that (under minimal expressiveness requirements on $\Mt$) the results of~\cite{synt,wollic19} can be used to provide analytic multiple-conclusion calculi to the strengthened logics by exploring the semantics obtained by our method, and provide illustrative examples. We close the paper in Section~\ref{sec6}, with some concluding remarks and topics for future work.

\section{Preliminaries}\label{sec2}

For the sake of self-containment, and in order to fix notation and terminology, we start by recalling (or suitably adapting, or generalizing) a number of useful notions and results. 
Instead of going through this material sequentially, the reader could as well jump this section for the moment and refer back here whenever necessary.\smallskip

A propositional \emph{signature} $\Sigma$ is a family $\{\Sigma^{(k)}\}_{k\in \nats}$ of sets, where each $\Sigma^{(k)}$ contains the \emph{$k$-place connectives} of $\Sigma$. 
To simplify notation, 
we express the fact that $\conn\in\Sigma^{(k)}$ for some $k\in\nats$ by simply writing $\conn\in\Sigma$, and we write $\Sigma'\cup\Sigma$ or 
$\Sigma'\subseteq\Sigma$ to denote the union or the inclusion, respectively, if $\Sigma'$ is also a signature. Given a signature $\Sigma$, the language $L_{\Sigma}(P)$ is the carrier of the absolutely free $\Sigma$-algebra generated over a given 
denumerable 
set of sentential variables $P$. Elements of $L_{\Sigma}(P)$ are called \emph{formulas}.  Given a formula $A\in L_{\Sigma}(P)$, we denote by $\var(A)$ (resp.\ $\sub(A)$) the set of variables (resp.\ subformulas) of~$A$, defined as usual; the extension of $\var$ and $\sub$, and other similar functions, from formulas to sets thereof is defined as expected. A \emph{substitution} is a member $\sigma\in L_\Sigma(P)^P$, that is, a function $\sigma:P\to  L_\Sigma(P)$, 
uniquely extendable into an endomorphism $\cdot^\sigma:L_\Sigma(P)\to  L_\Sigma(P)$.
Given $\Gamma\subseteq L_{\Sigma}(P)$, we denote by~$\Gamma^\sigma$ the set $\{A^\sigma:A\in\Gamma\}$. 
For $A\in L_\Sigma(P)$, define $A^{\inst}=\{A^\sigma:\sigma\in L_\Sigma(P)^P\}$ and $\Gamma^{\inst}=\bigcup\limits_{A\in\Gamma} A^{\inst}$.

Given formulas $A,A_1,\dots,A_n\in L_\Sigma(P)$ with $\var(A)\subseteq\{p_1,\dots,p_n\}$, we write $A(A_1,\dots,A_n)$ to denote the formula $A^\sigma$ where $\sigma(p_i)=A_i$ for $1\leq i\leq n$.\smallskip

Given a signature $\Sigma$, a \emph{$\Sigma$-PNmatrix (partial non-deterministic matrix)} is a structure $\Mt=\tuple{V,D,\cdot_\Mt}$ such that $V$ is a set (of \emph{truth-values}), $D\subseteq V$ is the set of \emph{designated} values, and $\conn_\Mt:V^k\to \wp(V)$ is a function (\emph{truth-table}) for each $k\in\nats$ and each $k$-place connective $\conn\in\Sigma$. When $\conn_\Mt(x_1,\dots,x_k)\neq\emptyset$ for all $x_1,\dots,x_k\in V$ we say that the truth-table of $\conn$ in 
$\Mt$ is \emph{total}. When $\conn_\Mt(x_1,\dots,x_k)$ has at most one element for all $x_1,\dots,x_k\in V$ we say that the truth-table of $\conn$ in 
$\Mt$ is \emph{deterministic}. Of course, deterministic does not imply total. Given $\Sigma'\subseteq\Sigma$, we say that $\Mt$ is \emph{$\Sigma'$-total} if the truth-tables in $\Mt$ of the connectives $\conn\in\Sigma'$ are all total. Analogously, we say that $\Mt$ is \emph{$\Sigma'$-deterministic} if the truth-tables in $\Mt$ of the connectives $\conn\in\Sigma'$ are all deterministic. When the $\Sigma$-PNmatrix $\Mt$ is $\Sigma$-total, or just total, it is simply 
called a \emph{$\Sigma$-Nmatrix, or Nmatrix (non-deterministic matrix)}. When a $\Sigma$-Nmatrix $\Mt$ is $\Sigma$-deterministic, or just deterministic, it is simply 
called a \emph{$\Sigma$-matrix}, or a logical matrix. For the sake of completing the picture, when a $\Sigma$-PNmatrix $\Mt$ is deterministic we call it 
a \emph{$\Sigma$-Pmatrix}, or Pmatrix.

Granted a $\Sigma$-PNmatrix $\Mt=\tuple{V,D,\cdot_\Mt}$, a \emph{$\Mt$-valuation} is a function $v:L_\Sigma(P)\to V$ such that $v(\conn(A_1,\dots,A_k))\in\conn_\Mt(v(A_1),\dots,v(A_k))$ for every $k\in\nats$, every $k$-place connective $\conn\in\Sigma$, and every $A_1,\dots,A_k\in L_\Sigma(P)$. We denote the set of all $\Mt$-valuations by $\Val_\Mt$. Given a formula $A\in L_\Sigma(\{p_1,\dots,p_n\})$, we extend the usual notation for connectives and use $A_\Mt:V^n\to\wp(V)$ to denote the function defined by $A_\Mt(x_1,\dots,x_n)=\{v(A):v\in\Val_\Mt\textrm{ with }v(p_i)=x_i\textrm{ for }1\leq i\leq n\}$ for every $x_1,\dots,x_n\in V$.


As is well known, if $\Mt=\tuple{V,D,\cdot_\Mt}$ is a matrix then every function $f:Q\to V$ with $Q\subseteq P$ can be extended to a $\Mt$-valuation (in an essentially unique way for all formulas $A$ with $\var(A)\subseteq Q$). As a consequence, $A_\Mt(x_1,\dots,x_n)$ is a singleton when $\Mt$ is a matrix, or more generally when there is $\Sigma'\subseteq\Sigma$ such that $A\in L_{\Sigma'}(P)$ and $\Mt$ is $\Sigma'$-deterministic and $\Sigma'$-total. If $\Mt$ is only known to be $\Sigma'$-deterministic, we can at least guarantee that $A_\Mt(x_1,\dots,x_n)$ has at most one element.
When $\Mt$ is a Nmatrix, however, $A_\Mt(x_1,\dots,x_n)$ can be a large (non-empty) set. Still, we know from~\cite{Avron} that a function 
$f:\Gamma\to V$ with $\Gamma\subseteq L_\Sigma(P)$ can be extended to a $\Mt$-valuation provided that $\sub(\Gamma)\subseteq\Gamma$ and 
that $f(\conn(A_1,\dots,A_k))\in\conn_\Mt(f(A_1),\dots,f(A_n))$ whenever $\conn(A_1,\dots,A_k)\in\Gamma$. 
In case $\Mt$ is a PNmatrix, in general, one does not even have such a guarantee~\cite{Baaz2013}, unless $f(\Gamma)\in\mathcal{T}_{\Mt}=\bigcup_{v\in \Val_\Mt}\wp(v(L_\Sigma(P)))$. In other words, given $X\subseteq V$, we have $X\in\mathcal{T}_\Mt$ if the values in $X$ are all together compatible in some valuation of $\Mt$. Of course, $A_\Mt(x_1,\dots,x_n)\neq\emptyset$ if $\{x_1,\dots,x_n\}\in\mathcal{T}_\Mt$.\smallskip

%
%
%
%
%
%

A set of valuations $\calV\subseteq\Val_\Mt$ characterizes a generalized (multiple conclusion) consequence relation $\der_\calV{\subseteq}\wp(L_\Sigma(P))\times\wp(L_\Sigma(P))$ defined by $\Gamma\der_\calV\Delta$ when for every $v\in \calV$ if $v(\Gamma)\subseteq D$ then $v(\Delta)\cap D\neq\emptyset$. Of course, it also defines the more usual (single conclusion) consequence relation
$\vdash_\calV{\subseteq}\wp(L_\Sigma(P))\times L_\Sigma(P)$ such that $\Gamma\vdash_\calV A$ when $\Gamma\der_\calV \{A\}$. In both cases, $\der_\calV$ and $\vdash_\calV$ are \emph{substitution invariant}, and respectively a Scott~\cite{scott} and Shoesmith and Smiley~\cite{ShoesmithSmiley} consequence relation, or else a Tarskian consequence relation, when $\calV$ is closed for substitutions, that is, if $v\in \calV$ and $\sigma\in L_\Sigma(P)^P$ then $v\circ (\,\cdot^\sigma)\in \calV$.

We simply write $\der_\Mt$ or $\vdash_\Mt$, instead of $\der_{\Val_\Mt}$ or $\vdash_{\Val_\Mt}$, respectively, and say that the consequences are characterized by $\Mt$. With respect to given consequence relations $\der$ or $\vdash$, we say that $\Mt$ is \emph{sound} if $\der{\subseteq}\der_\Mt$ or $\vdash{\subseteq}\vdash_\Mt$, and we say that $\Mt$ is \emph{complete} if $\der_\Mt{\subseteq}\der$ or $\vdash_\Mt{\subseteq}\vdash$.\smallskip

A \emph{refinement} of a $\Sigma$-PNmatrix $\Mt=\tuple{V,D,\cdot_\Mt}$ is any $\Sigma$-PNmatrix $\Mt'=\tuple{V',D',\cdot_{\Mt'}}$ with $V'\subseteq V$, $D'=D{\cap} V'$, and $\conn_{\Mt'}(x_1,\dots,x_k)\subseteq\conn_{\Mt}(x_1,\dots,x_k)$ for every $k\in\nats$, every $k$-place connective $\conn\in\Sigma$, and every $x_1,\dots,x_k\in V'$. 
It is clear, almost by definition, that $\Val_{\Mt'}\subseteq\Val_\Mt$. 
When it is always the case that $\conn_{\Mt'}(x_1,\dots,x_k)=\conn_{\Mt}(x_1,\dots,x_k)\cap V'$ then the refinement is called \emph{simple} and $\Mt'$ is denoted by $\Mt_{V'}$. 
Clearly, $v\in\Val_\Mt$ implies that $v\in\Val_{\Mt_{V'}}$ with $V'=v(L_\Sigma(P))$, and also that $\Mt_{V'}$ is a non-empty total refinement of $\Mt$. This observation justifies the equivalent definition of $\mathcal{T}_\Mt$ put forth in~\cite{wollic19}.

$\mathcal{E}:V\to\wp(U)$ is an \emph{expansion function} if $\mathcal{E}(x)\neq\emptyset$ for every $x\in V$, and $\mathcal{E}(x)\cap\mathcal{E}(x')=\emptyset$ if $x'\in V$ is distinct from $x$. Given $X\subseteq V$, we abuse notation and use $\mathcal{E}(X)$ to denote $\bigcup_{x\in X}\mathcal{E}(x)$. One associates to $\mathcal{E}$ its \emph{contraction} 
$\widetilde{\mathcal{E}}:\mathcal{E}(V)\to V$ such that, for each $y\in \mathcal{E}(V)$, $\widetilde{\mathcal{E}}(y)\in V$ is the unique such that $y\in\mathcal{E}(\widetilde{\mathcal{E}}(y))$. The \emph{$\mathcal{E}$-expansion} of a $\Sigma$-PNmatrix $\Mt=\tuple{V,D,\cdot_\Mt}$ is the $\Sigma$-PNmatrix $\mathcal{E}(\Mt)=\tuple{\mathcal{E}(V),\mathcal{E}(D),\cdot_{\mathcal{E}(\Mt)}}$ such that 
$\conn_{\mathcal{E}(\Mt)}(y_1,\dots,y_k)=\mathcal{E}(\conn_{\Mt}(\widetilde{\mathcal{E}}(y_1),\dots,\widetilde{\mathcal{E}}(y_k)))$ for every $k\in\nats$, every $k$-place connective $\conn\in\Sigma$, and every $y_1,\dots,y_k\in \mathcal{E}(V)$. By construction, it is clear that $\widetilde{\mathcal{E}}$ preserves and reflects designated values, i.e., $\widetilde{\mathcal{E}}(y)\in D$ if and only if $y\in \mathcal{E}(D)$. Further, given a function $f:L_\Sigma(P)\to\mathcal{E}(V)$, $f\in\Val_{\mathcal{E}(\Mt)}$ if and only if $\widetilde{\mathcal{E}}\circ f\in\Val_\Mt$.

A \emph{rexpansion} of a $\Sigma$-PNmatrix $\Mt=\tuple{V,D,\cdot_\Mt}$ is a refinement of some $\mathcal{E}$-expansion of $\Mt$. When $\Mt^\dag=\tuple{V^\dag,D^\dag,\cdot_{\Mt^\dag}}$ is a rexpansion of $\Mt$, we still have that if $v^\dag\in\Val_{\Mt^\dag}$ then $\widetilde{\mathcal{E}}\circ v^\dag\in\Val_{\Mt}$. {Consequently, we have that 
$\widetilde{\mathcal{E}}(A_{\Mt^\dag}(x_1,\dots,x_n))\subseteq A_{\Mt}(\widetilde{\mathcal{E}}(x_1),\dots,\widetilde{\mathcal{E}}(x_n))$, for every  $A\in L_\Sigma(\{p_1,\dots,p_n\})$ and $x_1,\dots,x_n\in V^\dag$.} 

It is easy to see that the refinement relation, the expansion relation, and thus also the rexpansion relation, are all transitive.\smallskip

We end this section with a very simple but useful lemma.

\begin{lemma}\label{easylemma}
Let $\Sigma'\subseteq\Sigma$ and $\Mt=\tuple{V,D,\cdot_\Mt}$ be a $\Sigma'$-deterministic $\Sigma$-PNmatrix.

If $\Mt^\dag=\tuple{V^\dag,D^\dag,\cdot_{\Mt^\dag}}$ is a rexpansion of $\Mt$, $A\in L_{\Sigma'}(\{p_1,\dots,p_n\})$, and $y,z\in A_{\Mt^\dag}(x_1,\dots,x_n)$ then $y\in D^\dag$ if and only if $z\in D^\dag$.
\end{lemma}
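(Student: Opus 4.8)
The plan is to unwind the definition of rexpansion and then reduce everything to the observation that a $\Sigma'$-formula behaves deterministically over the base matrix $\Mt$. Since $\Mt^\dag$ is a rexpansion of $\Mt$, it is by definition a refinement of some $\mathcal{E}$-expansion $\mathcal{E}(\Mt)$ for an expansion function $\mathcal{E}\colon V\to\wp(U)$; hence $V^\dag\subseteq\mathcal{E}(V)$, so the contraction $\widetilde{\mathcal{E}}\colon\mathcal{E}(V)\to V$ is defined on all of $V^\dag$, and $D^\dag=\mathcal{E}(D)\cap V^\dag$. The two facts I would keep at hand are exactly those already recorded in Section~\ref{sec2}: (i) $\widetilde{\mathcal{E}}$ preserves and reflects designated values, i.e.\ $\widetilde{\mathcal{E}}(w)\in D$ iff $w\in\mathcal{E}(D)$; and (ii) $\widetilde{\mathcal{E}}(A_{\Mt^\dag}(x_1,\dots,x_n))\subseteq A_{\Mt}(\widetilde{\mathcal{E}}(x_1),\dots,\widetilde{\mathcal{E}}(x_n))$, valid since $L_{\Sigma'}(\{p_1,\dots,p_n\})\subseteq L_\Sigma(\{p_1,\dots,p_n\})$.

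Now take $y,z\in A_{\Mt^\dag}(x_1,\dots,x_n)$; in particular $x_1,\dots,x_n,y,z\in V^\dag$, and $A_{\Mt^\dag}(x_1,\dots,x_n)$ is non-empty (no totality assumption on $\Mt$ is needed here precisely because we are handed its members $y$ and $z$). By (ii), both $\widetilde{\mathcal{E}}(y)$ and $\widetilde{\mathcal{E}}(z)$ lie in $A_{\Mt}(\widetilde{\mathcal{E}}(x_1),\dots,\widetilde{\mathcal{E}}(x_n))$. Since $A\in L_{\Sigma'}(\{p_1,\dots,p_n\})$ and $\Mt$ is $\Sigma'$-deterministic, this set has at most one element (as noted in Section~\ref{sec2}), so it is in fact a singleton and $\widetilde{\mathcal{E}}(y)=\widetilde{\mathcal{E}}(z)$. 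Finally I would chain, for each $w\in\{y,z\}\subseteq V^\dag$, the equivalences $w\in D^\dag$ iff $w\in\mathcal{E}(D)$ (because $D^\dag=\mathcal{E}(D)\cap V^\dag$) iff $\widetilde{\mathcal{E}}(w)\in D$ (by (i)); combined with $\widetilde{\mathcal{E}}(y)=\widetilde{\mathcal{E}}(z)$ this yields $y\in D^\dag$ iff $z\in D^\dag$, as claimed.

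I do not expect a genuine obstacle: the argument is essentially bookkeeping with the rexpansion machinery already set up in the preliminaries. The only points that call for a little care are to invoke the singleton property of $A_{\Mt}$ with the correct hypothesis, namely $A\in L_{\Sigma'}(P)$ \emph{together with} $\Sigma'$-determinism (totality of $\Mt$ being irrelevant), and to note that $\widetilde{\mathcal{E}}$ is genuinely applicable to $y,z,x_1,\dots,x_n$, which holds because all of these lie in $V^\dag\subseteq\mathcal{E}(V)$.
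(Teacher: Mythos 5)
Your proof is correct and follows essentially the same route as the paper's own argument: pass through the contraction $\widetilde{\mathcal{E}}$, use the inclusion $\widetilde{\mathcal{E}}(A_{\Mt^\dag}(x_1,\dots,x_n))\subseteq A_\Mt(\widetilde{\mathcal{E}}(x_1),\dots,\widetilde{\mathcal{E}}(x_n))$ together with $\Sigma'$-determinism to get $\widetilde{\mathcal{E}}(y)=\widetilde{\mathcal{E}}(z)$, and conclude via preservation and reflection of designated values. The only difference is that you make the intermediate step through $\mathcal{E}(D)$ explicit, which the paper compresses into a single chain of equivalences.
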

\begin{proof}
Assume that $\Mt^\dag$ is a refinement of the expansion of $\Mt$ with $\mathcal{E}$. If $y,z\in A_{\Mt^\dag}(x_1,\dots,x_n)$ then 
$\widetilde{\mathcal{E}}(y),\widetilde{\mathcal{E}}(z)\in \widetilde{\mathcal{E}}(A_{\Mt^\dag}(x_1,\dots,x_n))\subseteq A_\Mt(\widetilde{\mathcal{E}}(x_1),\dots,\widetilde{\mathcal{E}}(x_n))$. Since $\Mt$ is $\Sigma'$-deterministic and $A\in L_{\Sigma'}(P)$ it follows that  $A_\Mt(\widetilde{\mathcal{E}}(x_1),\dots,\widetilde{\mathcal{E}}(x_n))$ has at most one element, and thus $\widetilde{\mathcal{E}}(y)=\widetilde{\mathcal{E}}(z)$. Therefore, $y\in D^\dag$ iff $\widetilde{\mathcal{E}}(y)\in D$ iff $\widetilde{\mathcal{E}}(z)\in D$ iff $z\in D^\dag$.
\end{proof}


\section{Adding axioms}\label{sec3}

Given a signature $\Sigma$, a Tarskian consequence relation $\vdash$ over $\Sigma$, and $\Ax\subseteq L_\Sigma(P)$, the \emph{strengthening of ${\vdash}$ with (schema) axioms $\Ax$} is the consequence relation 
$\vdash^{\Ax}$ defined by $\Gamma\vdash^{\Ax}A$ if and only if $\Gamma\cup\Ax^{\inst}\vdash A$.\\

Our aim is to provide an adequate (and usable) semantics for $\vdash^{\Ax}$, given a semantic characterization of $\vdash$, a task that is well within the general effort of characterizing combined logics~\cite{ccal:wcarnielli:jfr:css:04,deccomp,wollic}. The following simple result, whose (simple) proof we omit, is a corollary of Lemma~2.7 of~\cite{soco}.

\begin{proposition}\label{proofbyvals}
Let $\Mt=\tuple{V,D,\cdot_\Mt}$ be a $\Sigma$-PNmatrix and $\Ax\subseteq L_\Sigma(P)$.
The consequence relation $\vdash^{\Ax}_\Mt$ is characterized by $\Val_\Mt^\Ax=\{v\in\Val_\Mt:v(\Ax^{\inst})\subseteq D\}$.
\end{proposition}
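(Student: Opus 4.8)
The plan is to prove the two inclusions $\vdash^{\Ax}_\Mt{\subseteq}\vdash_{\Val_\Mt^\Ax}$ and $\vdash_{\Val_\Mt^\Ax}{\subseteq}\vdash^{\Ax}_\Mt$ directly from the definitions, unfolding what each side means in terms of valuations. Recall that by definition $\Gamma\vdash^{\Ax}_\Mt A$ means $\Gamma\cup\Ax^{\inst}\vdash_\Mt A$, i.e.\ for every $v\in\Val_\Mt$, if $v(\Gamma\cup\Ax^{\inst})\subseteq D$ then $v(A)\in D$; and $\Gamma\vdash_{\Val_\Mt^\Ax} A$ means for every $v\in\Val_\Mt^\Ax$ (that is, every $v\in\Val_\Mt$ with $v(\Ax^{\inst})\subseteq D$), if $v(\Gamma)\subseteq D$ then $v(A)\in D$.

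For the inclusion $\vdash^{\Ax}_\Mt{\subseteq}\vdash_{\Val_\Mt^\Ax}$: assume $\Gamma\vdash^{\Ax}_\Mt A$ and take $v\in\Val_\Mt^\Ax$ with $v(\Gamma)\subseteq D$. Since $v$ is an $\Mt$-valuation already satisfying $v(\Ax^{\inst})\subseteq D$, we get $v(\Gamma\cup\Ax^{\inst})=v(\Gamma)\cup v(\Ax^{\inst})\subseteq D$, so by hypothesis $v(A)\in D$, as required. Conversely, for $\vdash_{\Val_\Mt^\Ax}{\subseteq}\vdash^{\Ax}_\Mt$: assume $\Gamma\vdash_{\Val_\Mt^\Ax} A$ and take any $v\in\Val_\Mt$ with $v(\Gamma\cup\Ax^{\inst})\subseteq D$. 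Then in particular $v(\Ax^{\inst})\subseteq D$, so $v\in\Val_\Mt^\Ax$, and also $v(\Gamma)\subseteq D$; hence $v(A)\in D$ by hypothesis. Thus $\Gamma\vdash^{\Ax}_\Mt A$.

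Since both directions are immediate set-theoretic manipulations, there is essentially no obstacle here; the only subtlety is the bookkeeping fact that $\Ax^{\inst}$ is closed under substitutions, so that the class $\Val_\Mt^\Ax$ is itself closed under substitutions and $\vdash_{\Val_\Mt^\Ax}$ is indeed a (substitution-invariant) Tarskian consequence relation — this is what makes the statement a legitimate ``characterization''. Concretely, if $v\in\Val_\Mt^\Ax$ and $\sigma$ is a substitution, then $(v\circ{\cdot^\sigma})(\Ax^{\inst})\subseteq v(\Ax^{\inst})\subseteq D$ because $(\Ax^{\inst})^\sigma\subseteq\Ax^{\inst}$, so $v\circ{\cdot^\sigma}\in\Val_\Mt^\Ax$. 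In fact, as the authors note, the whole statement is just the specialization to matrix semantics of Lemma~2.7 of~\cite{soco}, so one may alternatively simply invoke that result once the identification of $\Ax^{\inst}$-satisfying valuations is made explicit. Given this, the proof is short enough that presenting it inline (as the excerpt does for the surrounding simple facts) is appropriate, and indeed the authors choose to omit it.
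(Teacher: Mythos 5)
Your argument is correct and is exactly the ``simple proof'' the paper deliberately omits (noting only that the statement is a corollary of Lemma~2.7 of~\cite{soco}): unfolding $\Gamma\vdash^{\Ax}_\Mt A$ as $\Gamma\cup\Ax^{\inst}\vdash_\Mt A$ and observing that $v(\Gamma\cup\Ax^{\inst})\subseteq D$ is equivalent to $v\in\Val_\Mt^\Ax$ together with $v(\Gamma)\subseteq D$. Your extra remark that $(\Ax^{\inst})^\sigma\subseteq\Ax^{\inst}$ makes $\Val_\Mt^\Ax$ closed under substitutions, so that $\vdash_{\Val_\Mt^\Ax}$ is genuinely a substitution-invariant consequence relation, is a correct and worthwhile observation that the paper leaves implicit.
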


Our aim in the forthcoming subsections is to design some systematic way of using the ideas behind rexpansions for transforming $\Mt$ into a PNmatrix whose valuations somehow coincide with $\Val_\Mt^\Ax$.

\subsection{A general construction}\label{sec31}

As a first attempt, we employ a general technique from the theory of combining logics~\cite{ccal:wcarnielli:jfr:css:04,deccomp,wollic}. The overall idea, when starting from a given PNmatrix and a set of strengthening axioms, is to pair each formula of the logic with its possible values but guaranteeing that instances of axioms can only be paired with designated values. 

\begin{theorem}\label{flat}
Let $\Mt=\tuple{V,D,\cdot_\Mt}$ be a $\Sigma$-PNmatrix and $\Ax\subseteq L_\Sigma(P)$.\\
The consequence $\vdash_\Mt^\Ax$ is characterized by the rexpansion $\Mt^\flat_\Ax=\tuple{V^\flat_\Ax,D^\flat_\Ax,\cdot_{\Mt^\flat_\Ax}}$ of $\Mt$ defined by:	
\begin{itemize}
\item $V^\flat_\Ax=\{(x,A)\in V\times L_\Sigma(P):\textrm{ if }A\in\Ax^{\inst}\textrm{ then }x\in D\}$,
\item $D^\flat_\Ax=D\times L_{\Sigma}(P)$,
\item for each $k\in\nats$ and $\conn\in\Sigma^{(k)}$, $$\conn_{\Mt^\flat_\Ax}((x_1,A_1),\ldots,(x_k,A_k))=$$
$$\{(x,\conn(A_1,\dots,A_k))\in V^\flat_\Ax:x\in \conn_\Mt(x_1,\dots,x_n)\}.$$
\end{itemize}

\end{theorem}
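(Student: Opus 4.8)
The plan is to show that $\Mt^\flat_\Ax$ is indeed a rexpansion of $\Mt$, and then to establish a bijection-like correspondence between $\Val_{\Mt^\flat_\Ax}$ and $\Val_\Mt^\Ax$ (the set identified in Proposition~\ref{proofbyvals}) that preserves designation, from which the claim follows immediately. First I would exhibit the relevant expansion function: let $U=V\times L_\Sigma(P)$ and define $\mathcal{E}:V\to\wp(U)$ by $\mathcal{E}(x)=\{x\}\times L_\Sigma(P)$. This is clearly an expansion function (each $\mathcal{E}(x)$ is nonempty, and distinct values give disjoint fibers), its contraction is the first projection $\widetilde{\mathcal{E}}(x,A)=x$, and the $\mathcal{E}$-expansion $\mathcal{E}(\Mt)$ has truth-tables $\conn_{\mathcal{E}(\Mt)}((x_1,A_1),\dots,(x_k,A_k))=\{(x,A):x\in\conn_\Mt(x_1,\dots,x_k),\ A\in L_\Sigma(P)\}$. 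Then I would check that $\Mt^\flat_\Ax$ is a refinement of $\mathcal{E}(\Mt)$: we have $V^\flat_\Ax\subseteq \mathcal{E}(V)=U$ by definition; $D^\flat_\Ax=D\times L_\Sigma(P)=\mathcal{E}(D)=\mathcal{E}(D)\cap V^\flat_\Ax$ (using that $(x,A)\in D^\flat_\Ax$ forces $x\in D$, and conversely any such pair trivially satisfies the membership condition for $V^\flat_\Ax$); and the displayed truth-table for $\Mt^\flat_\Ax$ is by construction a subset of $\conn_{\mathcal{E}(\Mt)}$ intersected with $V^\flat_\Ax$, so it is a legitimate (in fact simple) refinement. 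Hence $\Mt^\flat_\Ax$ is a rexpansion of $\Mt$, so all the general facts about rexpansions apply.

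Next I would characterize the valuations of $\Mt^\flat_\Ax$. The key observation is that, because of the syntactic second component, a $\Mt^\flat_\Ax$-valuation $v^\flat$ is forced to ``track'' formulas: I claim that $v^\flat\in\Val_{\Mt^\flat_\Ax}$ if and only if $v^\flat(A)=(w(A),A)$ for some $w\in\Val_\Mt$ with $w(\Ax^{\inst})\subseteq D$. For the forward direction, set $w=\widetilde{\mathcal{E}}\circ v^\flat$, which lies in $\Val_\Mt$ since $\Mt^\flat_\Ax$ is a rexpansion of $\Mt$; an easy structural induction on $A$ shows that the second component of $v^\flat(A)$ is exactly $A$ — for a variable it is forced by $v^\flat(p)\in V^\flat_\Ax$ only giving $\{p\}$ after we... actually here the second component of a variable's value can be any formula a priori, so the induction must instead be: for a compound $\conn(A_1,\dots,A_k)$, the truth-table of $\conn$ in $\Mt^\flat_\Ax$ only outputs pairs whose second component is $\conn(A_1',\dots,A_k')$ where $A_i'$ is the second component of $v^\flat(A_i)$; combined with a base case where we may, without loss of generality, restrict to valuations sending each variable $p$ to a pair with second component $p$ (any other choice produces a valuation whose designation behaviour is subsumed), we get that $v^\flat(A)$ has second component $A$ for all $A$. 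Then $v^\flat(A)\in V^\flat_\Ax$ for $A\in\Ax^{\inst}$ forces $w(A)\in D$, i.e. $w(\Ax^{\inst})\subseteq D$. For the converse, given such $w$, the function $v^\flat(A)=(w(A),A)$ takes values in $V^\flat_\Ax$ (precisely because $w(\Ax^{\inst})\subseteq D$) and respects every truth-table of $\Mt^\flat_\Ax$ by construction, so $v^\flat\in\Val_{\Mt^\flat_\Ax}$.

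Finally I would assemble the pieces. The map $w\mapsto v^\flat$ with $v^\flat(A)=(w(A),A)$ is a bijection between $\Val_\Mt^\Ax=\{w\in\Val_\Mt:w(\Ax^{\inst})\subseteq D\}$ and (the relevant part of) $\Val_{\Mt^\flat_\Ax}$, and it satisfies $v^\flat(A)\in D^\flat_\Ax$ iff $w(A)\in D$ for every $A$, since $D^\flat_\Ax=D\times L_\Sigma(P)$. Therefore, for all $\Gamma\cup\{A\}\subseteq L_\Sigma(P)$, we have $\Gamma\vdash_{\Mt^\flat_\Ax}A$ iff for every such $w$, $w(\Gamma)\subseteq D$ implies $w(A)\in D$, which by Proposition~\ref{proofbyvals} is exactly $\Gamma\vdash^{\Ax}_\Mt A$. (The spurious valuations of $\Mt^\flat_\Ax$ that send some variable to a pair with a ``wrong'' second component do not affect the consequence relation, since one checks directly that each is a relabelling of one of the canonical valuations with identical designation pattern on all formulas — or, more cleanly, one argues the inclusion $\vdash^{\Ax}_\Mt\subseteq\vdash_{\Mt^\flat_\Ax}$ using only canonical valuations and the reverse inclusion using $\widetilde{\mathcal{E}}$ together with the fact that the second component of a compound is its own formula.) I expect the main obstacle to be precisely this last bookkeeping point: handling valuations whose action on the variables puts arbitrary formulas in the second coordinate, and convincing oneself that they are harmless — the cleanest route is to observe that for any $v^\flat\in\Val_{\Mt^\flat_\Ax}$, on the subalgebra generated by $\var(\Gamma\cup\{A\})$ the second components are determined by those of finitely many variables, and designation of $v^\flat$ on a compound depends only on its first component, so restricting attention to the canonical valuations $w\mapsto(w(\cdot),\cdot)$ suffices for computing $\vdash_{\Mt^\flat_\Ax}$.
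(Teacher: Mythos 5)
Your overall skeleton matches the paper's: exhibit the expansion $\mathcal{E}(x)=\{x\}\times L_\Sigma(P)$, reduce to showing $\{\widetilde{\mathcal{E}}\circ v^\flat:v^\flat\in\Val_{\Mt^\flat_\Ax}\}=\Val_\Mt^\Ax$ via Proposition~\ref{proofbyvals}, and use the canonical valuations $A\mapsto(v(A),A)$ for the completeness inclusion. That much is fine (one small slip: the refinement is \emph{not} simple, since $\conn_{\Mt^\flat_\Ax}$ pins the second coordinate to $\conn(A_1,\dots,A_k)$ rather than taking everything in $\conn_{\mathcal{E}(\Mt)}(\cdots)\cap V^\flat_\Ax$; but plain refinement is all you need).

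The genuine gap is in the soundness direction, exactly where you flag your own unease. Your biconditional ``$v^\flat\in\Val_{\Mt^\flat_\Ax}$ iff $v^\flat(A)=(w(A),A)$'' is false left-to-right, because nothing forces $v^\flat(p)$ to have second component $p$; and your two attempted repairs do not close the hole. The ``WLOG restrict to canonical valuations'' and the ``each spurious valuation is a relabelling of a canonical one with identical designation pattern'' claims both presuppose what soundness has to prove: that for an arbitrary $v^\flat$ the function $w=\widetilde{\mathcal{E}}\circ v^\flat$ satisfies $w(\Ax^{\inst})\subseteq D$ (otherwise $A\mapsto(w(A),A)$ is not even a legitimate $\Mt^\flat_\Ax$-valuation, since some pair $(w(A),A)$ with $A\in\Ax^{\inst}$ would fall outside $V^\flat_\Ax$). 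The missing observation, which is the crux of the paper's argument, is that the second component of $v^\flat(A)$ is always $A^\sigma$ where $\sigma(p)$ is the second component of $v^\flat(p)$; hence if $A\in\Ax^{\inst}$ then that second component lies in $(\Ax^{\inst})^{\inst}=\Ax^{\inst}$, and the defining condition of $V^\flat_\Ax$ then forces the first component into $D$. With that one line, every valuation of $\Mt^\flat_\Ax$ (canonical or not) projects into $\Val_\Mt^\Ax$ and your assembly goes through; without it, the non-canonical valuations are not shown to be harmless, and the inclusion $\vdash_\Mt^\Ax\subseteq\vdash_{\Mt^\flat_\Ax}$ is not established.
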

\proof{We prove, in turn, that $\Mt^\flat_\Ax$ is a rexpansion of $\Mt$, and then the soundness and completeness of $\Mt^\flat_\Ax$ with respect to $\vdash_\Mt^\Ax$.
\begin{description}
\item[Rexpansion.]
It is easy to see that the PNmatrix $\Mt^\flat_\Ax$ is a refinement of the expansion of $\Mt$ with $\mathcal{E}(x)=\{x\}\times L_\Sigma(P)$. 
$\widetilde{\mathcal{E}}:V^\flat_\Ax\to V$ is such that $\widetilde{\mathcal{E}}(x,A)=x$, and clearly preserves and reflects designated values. Using Proposition~\ref{proofbyvals}, it suffices to show that $\{\widetilde{\mathcal{E}}\circ v^\flat:v^\flat\in\Val_{\Mt^\flat_\Ax}\}=\Val_\Mt^\Ax$.
\end{description}

\noindent Note that if $v^\flat\in\Val_{\Mt^\flat_\Ax}$ and $v^\flat(A)=(x,B)$ then $B\in A^{\inst}$. Namely, we have  $B=A^\sigma$ where $\sigma\in L_\Sigma(P)^P$ is such that 
$\sigma(p)=C$ if $v^\flat(p)=(y,C)$.

\begin{description}
\item[Soundness.]
Since $\Mt^\flat_\Ax$ is a rexpansion of $\Mt$ with $\mathcal{E}$, we know that if $v^\flat\in\Val_{\Mt^\flat_\Ax}$ then $\widetilde{\mathcal{E}}\circ v^\flat\in\Val_\Mt$. 
Further, if $A\in\Ax^{\inst}$ and $v^\flat(A)=(x,B)$ then $B\in (\Ax^{\inst})^{\inst}=\Ax^{\inst}$ and $\widetilde{\mathcal{E}}(v^\flat(A))=x\in D$. We conclude that $\{\widetilde{\mathcal{E}}\circ v^\flat:v^\flat\in\Val_{\Mt^\flat_\Ax}\}\subseteq\Val_\Mt^\Ax$ and thus that $\vdash_\Mt^\Ax\subseteq\vdash_{\Mt^\flat_\Ax}$.

\item[Completeness.]
Reciprocally, if $v\in\Val_\Mt$ and $v(\Ax^{\inst})\subseteq D$ then $v=\widetilde{\mathcal{E}}\circ v^\flat$ with $v^\flat(A)=(v(A),A)$ for each $A\in L_\Sigma(P)$.
Since $v\in\Val_\Mt$, the fact that $v(\Ax^{\inst})\subseteq D$ guarantees that $v^\flat\in\Val_{\Mt^\flat_\Ax}$. We conclude that $\Val_\Mt^\Ax\subseteq
\{\widetilde{\mathcal{E}}\circ v^\flat:v^\flat\in\Val_{\Mt^\flat_\Ax}\}$ and thus that $\vdash_{\Mt^\flat_\Ax}{\subseteq}\vdash_\Mt^\Ax$.
\qed
\end{description}
}

In the definition of $\Mt^\flat_\Ax$, if $\conn_\Mt(x_1,\dots,x_k)\cap D=\emptyset$ and moreover one has $\conn(A_1,\dots,A_k)\in\Ax^{\inst}$ then $\conn_{\Mt^\flat_\Ax}((x_1,A_1),\ldots,(x_k,A_k))=\emptyset$, which in general explains why the resulting PNmatrix may fail to be total. 
Still, $\Mt^\flat_\Ax$ is deterministic (actually a Pmatrix) when $\Mt$ is a (P)matrix. These two observations mean that the construction actually uses partiality in a most relevant way, but not non-determinism, which is simply imported from the starting PNmatrix.
Note also that the construction, though fully illustrative of the power of rexpansions (generalized to PNmatrices) to accomodate new axioms, has other drawbacks. In fact, $\Mt^\flat_\Ax$ is always infinite, even if starting from a finite $\Mt$. Further, the structure of $\Mt^\flat_\Ax$ is quite syntactic, as it incorporates an obvious pattern-matching mechanism for recognizing instances of axioms into the received structure of $\Mt$.\\

In general, it is not possible to do much better, as it may happen that $\vdash_\Mt^\Ax$ cannot be characterized by a finite PNmatrix. 
For instance, as noted in~\cite{rexpansions}, Avron and coauthors show in~\cite{avron2007} that the logic resulting from strengthening the Nmatrix characterizing the basic paraconsistent logic 
$\mathcal{BK}$ of~\cite{Avron2012atLICS} with the axiom $\neg(p_1\e\neg p_1)\to\circ p_1$ yields a logic that cannot be characterized by a finite Nmatrix. 
Thus, in order to improve on our result, it can be useful to look for suitable ways of controlling the shape of the axioms considered, as many other examples are known to have finite characterizations~\cite{avron2005,avron2007,Avron2012atLICS,taming,rexpansions,carcon}.\smallskip

On the other hand, the construction of Theorem~\ref{flat} unveils a very interesting property of PNmatrices: every axiomatic extension of the logic of a finite (or denumerable) PNmatrix can be characterized by a denumerable PNmatrix. Just by itself, the result entails that \emph{intuitionistic propositional logic ($\mathcal{IPL}$)}
can be given by a single denumerable PNmatrix, sharply contrasting with the known fact that a characteristic matrix for $\mathcal{IPL}$ needs to be non-denumerable 
(see~\cite{godel,wronski74,Woj}).  %
\begin{example}\em
Fix a suitable signature containing the two-place connective $\to$, and use the method above for strengthening with the usual axioms $\Int$ of intuitionistic logic the consequence relation characterized by the Nmatrix $\MPt=\tuple{\{0,1\},\{1\},\cdot_\MPt}$ where $\conn_\MPt(x_1,\dots,x_k)=\{0,1\}$ for every $k$-place $\conn\in\Sigma$ such that ${\conn}\,{\neq}\to$, and $\to_\MPt$ has the truth-table below\footnote{For simplicity, in this and other examples, we omit the usual brackets of set notation when describing the truth-tables.}.
\begin{center}
 \begin{tabular}{c | c c c }
${\to_\MPt}$ & $0$ & $1$  \\
\hline
$0$&  $ 0,1 $ & $ 0,1 $ \\
$1$ &$0$ & $ 0,1 $ 
\end{tabular}
\end{center}
It is easy to see that $\vdash_{\MPt}$ is precisely the consequence determined by the single rule $\frac{\;p\quad p\to q\;}{q}$ of \emph{modus ponens}, and so $\vdash_{\MPt^\flat_\Int}$ is precisely $\mathcal{IPL}$.\hfill $\triangle$
\end{example}

This idea applies also to \emph{propositional normal (global) modal logic} $\mathcal{K}$. 
\begin{example}\em
For simplicity, take a signature containing only the $1$-place modality $\square$, and the $2$-place connective $\to$. The logic determined by the rules of \emph{modus ponens} and \emph{necessitation}, i.e., $\frac{p}{\;\square\,p\;}$, is easily seen to be characterized by the Nmatrix $\MPt_\square=\tuple{\{0,1\},\{1\},\cdot_{\MPt_\square}}$ given by the truth-tables below.
\begin{center}
 \begin{tabular}{c | c c c }
${\to_{\MPt_\square}}$ & $0$ & $1$  \\
\hline
$0$&  $ 0,1 $ & $ 0,1 $ \\
$1$ &$0$ & $ 0,1 $ 
\end{tabular}
\qquad\qquad
\begin{tabular}{c | c }
 & ${\square_{\MPt_\square}}$ \\
\hline
$0$& $ 0,1 $ \\
$1$ &$ 1 $ 
\end{tabular}
\end{center}
Collecting in $\Norm$ the usual axioms of classical implication plus the \emph{normalization axiom} $\square(p\to q)\to(\square p\to \square q)$ and applying Theorem~\ref{flat}, we get a denumerable PNmatrix $(\MPt_\square)^\flat_\Norm$ characterizing $\mathcal{K}$.
\hfill $\triangle$
\end{example}

These cases suggest another possible obstacle to improving our result, namely when the received PNmatrix is not-deterministic and actually mixes designated with undesignated values in some entry of its truth-tables. When the basis is deterministic (enough) many examples are known to be finitely characterizable. 

\subsection{A better (less general) construction}\label{sec32}

In order to improve on the construction presented in the previous subsection, we will borrow full inspiration from the construction in~\cite{taming}, and try to push the boundaries of the scope of application of the underlying ideas.

Let $\Sigma$ be a signature, fix $\Sigma^d\subseteq\Sigma$ and set $\mathcal{U}\subseteq (\Sigma\setminus\Sigma^d)^{(1)}$ to be the set of all 
1-place connectives not in $\Sigma^d$. We shall consider the set $\mathcal{U}^*$ of all finite strings of elements of $\mathcal{U}$ (the Kleene closure of $\mathcal{U}$). We shall use $\varepsilon$ to denote the \emph{empty string}, and $uw\in\mathcal{U}^*$ to denote the \emph{concatenation} of strings $u,w\in\mathcal{U}^*$. We use $\prfx(w)$ to denote the set of all \emph{prefixes} of string $w$, including $\varepsilon$.
Given $w\in\mathcal{U}^*$ and $A\in L_\Sigma(P)$ we will use $wA$ to denote the formula defined inductively by $\varepsilon A=A$, and $\bullet wA=\bullet(wA)$ if $\bullet\in\mathcal{U}$.\\

\begin{definition}\label{simpleaxiom}
Let $\conn\in\Sigma$ be a $k$-place connective. \emph{$\Sigma^d$-simple formulas based on $\conn$} are formulas  $B\in L_\Sigma(\{p_1,\dots,p_k\})$ such that $B=A^\sigma$
for some \emph{structure formula} $A\in L_{\Sigma^d}(\{q_1,\dots,q_n,r_1,\dots,r_m\})$ and some substitution $\sigma$ for which:
\begin{itemize} 
\item $\sigma(q_i)=w_ip_j$ with $w_i\in\mathcal{U}^*$ and $1\leq j\leq k$, for each $1\leq i\leq n$, and 
\item $\sigma(r_l)=u_l\conn(p_1,\dots,p_k)$ with $u_l\in\mathcal{U}^*$, for each $1\leq l\leq m$.
\end{itemize}

\noindent For ease of notation, we will simply write
$$A(\dots w_ip_j \dots u_l\conn(p_1,\dots,p_k)\dots)$$
\noindent for a generic $\Sigma^d$-simple formula based on $\conn$.\smallskip


The \emph{look-ahead} set induced by $B$ is
$\Theta_{B}=(\cup_{i=1}^n\prfx(w_i))\cup(\cup_{l=1}^m \prfx(u_l))$.\smallskip

We call $\Sigma^d$-simple formula to any formula which is $\Sigma^d$-simple based on some\footnote{
Since not all the variables $q_1,\dots,q_n,r_1,\dots,r_m$ need to occur in $A$, it may well happen that the subformula $\conn(p_1,\dots,p_k)$ ends up not appearing in the $\Sigma^d$-simple formula $B$ based on $\conn$. For this reason, such a $\Sigma^d$-simple formula can also be based on any available $k'$-place connective distinct from $\conn$, as long as $k'\geq k$ (more precisely, $k'$ needs to be at least as big as the number of distinct variables $p_j$ occurring in $B$).} connective of $\Sigma$.
The \emph{look-ahead} set induced by a set $\Gamma$ of $\Sigma^d$-simple formulas is\footnote{Note that, in our definition, $\Theta_\Gamma$ is not simply the union of the look-ahead sets of each formula in $\Gamma$. We not only want $\Theta_\Gamma$ to be closed for taking prefixes, but we want $\varepsilon\in\Theta_\Gamma$ even if $\Gamma=\emptyset$ (a rather pathological case).
} 
$\Theta_\Gamma=\{\varepsilon\}\cup(\cup_{B\in\Gamma}\Theta_B)$.\hfill $\triangle$
%
%
%
\end{definition}

$\Sigma^d$-simple formulas will be the allowed shapes of our (schema) axioms. Comparing with~\cite{taming}, our setup is strictly more general in that it allows for an arbitrary base signature $\Sigma$. If we set $\Sigma^d$ to consist of the usual 2-place connectives of positive logic $\e,\ou,\to$, and let 
$\Sigma=\Sigma^d\cup\mathcal{U}$ where $\mathcal{U}$ collects a number of additional 1-place connectives (e.g., $\neg,\circ$), we recover the setup of~\cite{taming}. 

For instance, axiom $B=\circ\neg(p_1\e p_2)\to(\neg\circ p_1\ou \neg\circ p_2)$ is $\Sigma^d$-simple in this setting, as can be seen by taking $A=r_1\to(q_1\ou q_2)$, $\conn=\e$, and $\sigma(q_1)=w_1p_1=\neg\circ p_1$, $\sigma(q_2)=w_2p_2=\neg\circ p_2$, thus with $w_1=w_2=\neg\circ$, and $\sigma(r_1)=u_1(p_1\e p_2)=\circ\neg(p_1\e p_2)$, thus with $u_1=\circ\neg$.

Easily, all axioms covered in~\cite{taming} are $\Sigma^d$-simple. However, $p_1\e\neg p_1$ or $p_1\to(\neg p_1\to \neg p_2)$ fall outside the scope of~\cite{taming}, but are still $\Sigma^d$-simple (based on any of the $2$-place connectives, as the $r_l$ variables are not necessary). Axioms like $\neg(p_1\e\neg p_1)\to\circ p_1$ are not $\Sigma^d$-simple, due to the interleaved nesting of $\neg$ and $\e$, and fall outside the scope of both methods.\smallskip

Having set up our syntactic restriction on the set of allowed axioms, we will still need to match them with appropriate semantic restrictions. 
Before we do it, we need to shape up another crucial idea from~\cite{taming}: when strengthening with a set of axioms $\Ax$, the truth-values of the intended PNmatrix will correspond to suitable functions $f:\Theta_\Ax\to V$ where $V$ is the set of truth-values of the given PNmatrix; when the value of a formula $A$ is $f$ this does not only  settle its face value to $f(\varepsilon)$ but also gives as look-ahead information the value $f(w)$ for the value of formulas $wA$ with $w\in\Theta_\Ax $.

\begin{definition}\label{fofv}
Let $\Mt=\tuple{V,D,\cdot_\Mt}$ be a $\Sigma$-PNmatrix and $\Ax$ a set of $\Sigma^d$-simple formulas.
For each $v\in\Val_\Mt$ and $A\in L_\Sigma(P)$, we define $f^A_v\in V^{\Theta_\Ax}$ by letting $f^A_v(w)=v(wA)$ for each $w\in\Theta_\Ax$.\hfill $\triangle$
\end{definition}

It is worth noting that, by definition, $f^A_v(uw)=f^{{w\!A}}_v(u)$ whenever $uw\in\Theta_\Ax$.\\

We can finally put forth our improved construction, taking $\Sigma^d$-simple axioms. In order to make it work it will suffice to require that the given PNmatrix is $\Sigma^d$-deterministic (not necessarily $\Sigma^d$-total). The more general condition, though, will be to require that the PNmatrix is a rexpansion of a $\Sigma^d$-deterministic PNmatrix, as the crucial necessary property is granted by Lemma~\ref{easylemma}.

\begin{theorem}\label{theconstruction}
Let $\Mt=\tuple{V,D,\cdot_\Mt}$ be a $\Sigma$-PNmatrix and $\Ax\subseteq L_\Sigma(P)$.

If there exists $\Sigma^d\subseteq\Sigma$ such that $\Mt$ is a rexpansion of some $\Sigma^d$-deterministic PNmatrix, and the formulas in $\Ax$ are all $\Sigma^d$-simple, then the consequence $\vdash_\Mt^\Ax$ is characterized by the rexpansion
 $\Mt^\sharp_\Ax=\tuple{V^\sharp_\Ax,D^\sharp_\Ax,\cdot_{\Mt^\sharp_\Ax}}$ of $\Mt$ defined by:	
\begin{itemize}
\item $V^\sharp_\Ax=\bigcup\limits_{v\in\Val_{\Mt}^\Ax}\{f^A_v:A\in L_\Sigma(P)\}$,
\item $D^\sharp_\Ax=\{f\in V^\sharp_\Ax:f(\varepsilon)\in D\}$,
\item for each $k\in\nats$ and $\conn\in\Sigma^{(k)}$, $$\conn_{\Mt^\sharp_\Ax}(f_1,\dots,f_k)=$$
$$\bigcup\limits_{v\in\Val_{\Mt}^\Ax}\{f^{\conn(A_1,\dots,A_k)}_v:A_i\in L_\Sigma(P)\textrm{ with }f^{A_i}_v=f_i \textrm{ for } 1\leq i\leq k\}.$$
\end{itemize}

\end{theorem}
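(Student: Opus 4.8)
The plan is to reduce everything to Proposition~\ref{proofbyvals} by exhibiting the obvious contraction and then checking that the $\Mt^\sharp_\Ax$-valuations are, up to contraction, exactly the elements of $\Val_\Mt^\Ax$. First I would verify that $\Mt^\sharp_\Ax$ is genuinely a rexpansion of $\Mt$ via the expansion function $\mathcal{E}(x)=\{f\in V^{\Theta_\Ax}:f(\varepsilon)=x\}$, whose images are non-empty and pairwise disjoint, with contraction $\widetilde{\mathcal{E}}(f)=f(\varepsilon)$. Since every $f^A_v\colon\Theta_\Ax\to V$ satisfies $f^A_v(\varepsilon)=v(A)$, one gets $V^\sharp_\Ax\subseteq\mathcal{E}(V)$, $D^\sharp_\Ax=\mathcal{E}(D)\cap V^\sharp_\Ax$, and unfolding the definition of $\conn_{\Mt^\sharp_\Ax}$ shows $\conn_{\Mt^\sharp_\Ax}(f_1,\dots,f_k)\subseteq\mathcal{E}(\conn_\Mt(f_1(\varepsilon),\dots,f_k(\varepsilon)))$, so $\Mt^\sharp_\Ax$ refines $\mathcal{E}(\Mt)$. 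As $\widetilde{\mathcal{E}}$ preserves and reflects designated values, by Proposition~\ref{proofbyvals} it suffices to prove $\{\widetilde{\mathcal{E}}\circ v^\sharp:v^\sharp\in\Val_{\Mt^\sharp_\Ax}\}=\Val_\Mt^\Ax$. The inclusion $\supseteq$ (completeness) is routine: for $v\in\Val_\Mt^\Ax$ set $v^\sharp(A)=f^A_v$; then $\widetilde{\mathcal{E}}\circ v^\sharp=v$, and $v^\sharp(\conn(A_1,\dots,A_k))=f^{\conn(A_1,\dots,A_k)}_v$ lies in $\conn_{\Mt^\sharp_\Ax}(f^{A_1}_v,\dots,f^{A_k}_v)$ witnessed by $v$ and the $A_i$ themselves, so $v^\sharp\in\Val_{\Mt^\sharp_\Ax}$.

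The real work is the inclusion $\subseteq$ (soundness). Fix $v^\sharp\in\Val_{\Mt^\sharp_\Ax}$ and put $v=\widetilde{\mathcal{E}}\circ v^\sharp$, which lies in $\Val_\Mt$ because $\Mt^\sharp_\Ax$ is a rexpansion of $\Mt$; I must show $v(\Ax^{\inst})\subseteq D$. The linchpin is a \emph{coherence} property: for every $A\in L_\Sigma(P)$ and $w\in\Theta_\Ax$ one has $v^\sharp(wA)(\varepsilon)=v^\sharp(A)(w)$, i.e.\ the look-ahead recorded at $w$ in the value of $A$ agrees with the face value of $wA$. I would prove this by induction on $|w|$, writing $w=w''\bullet$ with $\bullet\in\mathcal{U}$ innermost and $w''$ a prefix (so $w''\in\Theta_\Ax$ by prefix-closure): the truth-table of $\bullet$ forces $v^\sharp(\bullet A)=f^{\bullet B}_{v_1}$ with $f^{B}_{v_1}=v^\sharp(A)$ for some $v_1\in\Val_\Mt^\Ax$ and $B$, whence $v^\sharp(\bullet A)(w'')=v_1(w''(\bullet B))=v_1((w''\bullet)B)=v^\sharp(A)(w''\bullet)$, and the induction hypothesis on $w''$ applied to $\bullet A$ closes the step.

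Granted coherence, take any $C\in\Ax^{\inst}$, say $C=B^\rho$ with $B$ a $\Sigma^d$-simple axiom based on $\conn$, so $C=A(\dots w_i\rho(p_{j})\dots u_l\conn(\rho(p_1),\dots,\rho(p_k))\dots)$ for a structure formula $A\in L_{\Sigma^d}$. I would introduce the auxiliary formula $Q=\conn(\rho(p_1),\dots,\rho(p_k))$ and read off from $v^\sharp(Q)\in\conn_{\Mt^\sharp_\Ax}(v^\sharp(\rho(p_1)),\dots,v^\sharp(\rho(p_k)))$ a \emph{single} witness $v_0\in\Val_\Mt^\Ax$ together with formulas $A_1,\dots,A_k$ such that $v^\sharp(\rho(p_i))=f^{A_i}_{v_0}$ and $v^\sharp(Q)=f^{\conn(A_1,\dots,A_k)}_{v_0}$. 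Using coherence together with $w_i,u_l\in\Theta_\Ax$, the argument values of $C$ under $v$ then match those of $C_0=B^{\rho_0}$ (where $\rho_0(p_i)=A_i$) under $v_0$: $v(w_i\rho(p_j))=v^\sharp(\rho(p_j))(w_i)=v_0(w_iA_j)$ and $v(u_lQ)=v^\sharp(Q)(u_l)=v_0(u_l\conn(A_1,\dots,A_k))$. Since $A\in L_{\Sigma^d}$ and $\Mt$ is a rexpansion of a $\Sigma^d$-deterministic PNmatrix, Lemma~\ref{easylemma} shows that $v(C)$ and $v_0(C_0)$, both members of the same value-set $A_\Mt(\cdot)$, are designated together; as $C_0\in\Ax^{\inst}$ and $v_0\in\Val_\Mt^\Ax$ give $v_0(C_0)\in D$, we conclude $v(C)\in D$, hence $v\in\Val_\Mt^\Ax$.

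I expect the coherence lemma, coupled with the extraction of a single coherent witness $v_0$ from the truth-table of $\conn$, to be the main obstacle: this is exactly where the look-ahead data pays off, propagating values correctly through the unary prefixes while $\Sigma^d$-determinism (via Lemma~\ref{easylemma}) keeps the structural skeleton designation-invariant. A secondary subtlety is that $\conn(\rho(p_1),\dots,\rho(p_k))$ need not actually occur in $C$ (the footnoted case with no $r_l$ variables); using $Q$ merely as an auxiliary formula to harvest $v_0$ and the $A_i$ sidesteps this uniformly.
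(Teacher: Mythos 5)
Your proposal is correct and follows essentially the same route as the paper's own proof: the same expansion function and reduction via Proposition~\ref{proofbyvals}, the same look-ahead coherence property $v^\sharp(wA)(u)=v^\sharp(A)(uw)$ proved inductively from the truth-tables of the connectives in $\mathcal{U}$, the same extraction of a single witness $v_0$ from the table of $\conn$, and the same appeal to Lemma~\ref{easylemma} to transfer designation from the transported instance $C_0$ back to $C$. The completeness direction matches the paper verbatim, so there is nothing to add.
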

\proof{We prove that $\Mt^\sharp_\Ax$ is a rexpansion of $\Mt$, and then its soundness and completeness with respect to $\vdash_\Mt^\Ax$.
\begin{description}
\item[Rexpansion.]
It is simple to check that the PNmatrix $\Mt^\sharp_\Ax$ is a refinement of the expansion of $\Mt$ with $\mathcal{E}(x)=\{f\in V^{\Theta_\Ax}:f(\varepsilon)=x\}$. Just note that one has $f_v^{\conn(A_1,\dots,A_k)}(\varepsilon)=v(\conn(A_1,\dots,A_k))\in\conn_\Mt(v(A_1),\dots,v(A_k))=\conn_\Mt(f_v^{A_1}(\varepsilon),\dots,f_v^{A_k}(\varepsilon))$ whenever it is the case that  $v\in\Val_\Mt$, $k\in\nats$, $\conn\in\Sigma^{(k)}$ and $A_1,\dots,A_k\in L_\Sigma(P)$.
$\widetilde{\mathcal{E}}:V^\sharp_\Ax\to V$ is such that $\widetilde{\mathcal{E}}(f)=f(\varepsilon)$, and clearly preserves and reflects designated values. As before, using Proposition~\ref{proofbyvals}, it suffices to show that $\{\widetilde{\mathcal{E}}\circ v^\sharp:v^\sharp\in\Val_{\Mt^\sharp_\Ax}\}=\Val_\Mt^\Ax$.
\end{description}
\noindent
For a 1-place connective $\bullet\in\mathcal{U}$ and $u\in\mathcal{U}^*$ such that $u\bullet\in\Theta_\Ax$, given a valuation $v^\sharp\in\Val_{\Mt^\sharp_\Ax}$, we have that $v^\sharp(\bullet A)(u)=v^\sharp(A)(u\bullet)$, simply because $v^\sharp(\bullet A)\in\bullet_{\Mt^\sharp_\Ax}(v^\sharp(A))$ and by definition of $\bullet_{\Mt^\sharp_\Ax}$ there must exist $v\in\Val_\Mt^\Ax$ such that $v^\sharp(\bullet A)=f_v^{\bullet B}$ and $v^\sharp(A)=f_v^{B}$. It easily follows, by induction, that if $w\in\mathcal{U}^*$ is such that $uw\in\Theta_\Ax$ then also $v^\sharp(wA)(u)=v^\sharp(A)(uw)$.
%
%
%
%
%
\begin{description}
\item[Soundness.]
Since $\Mt^\sharp_\Ax$ is a rexpansion of $\Mt$ with $\mathcal{E}$, if $v^\sharp\in\Val_{\Mt^\sharp_\Ax}$ then $\widetilde{\mathcal{E}}\circ v^\sharp\in\Val_\Mt$. Hence, when $B=A(\dots w_i A_j \dots u_n \conn(A_1,\dots,A_k)\dots)\in\Ax^{\inst}$ then 
setting $y=v^\sharp(B)(\varepsilon)$ we have $$y\in A_\Mt(\dots v^\sharp(w_i A_j)(\varepsilon) \dots v^\sharp(u_n \conn(A_1,\dots,A_k))(\varepsilon)\dots)=$$ $$A_\Mt(\dots v^\sharp(A_j)(w_i) \dots v^\sharp(\conn(A_1,\dots,A_k))(u_n)\dots).$$
By definition of $\conn_{\Mt^\sharp_\Ax}$, we know there exist $v\in\Val_\Mt^\Ax$ and $B_1,\dots,B_k\in L_\Sigma(P)$ such that 
$v^\sharp(\conn(A_1,\dots,A_k))=f_v^{\conn(B_1,\dots,B_k)}$ and $v^\sharp(A_j)=f_v^{B_j}$ for $1\leq j\leq k$. 
Thus, we have $$y\in A_\Mt(\dots f_v^{B_j}(w_i) \dots f_v^{\conn(B_1,\dots,B_k)}(u_n)\dots)=$$ $$A_\Mt(\dots v(w_i B_j) \dots v(u_n {\conn(B_1,\dots,B_k)})\dots).$$ Clearly, setting $z=v(A(\dots w_i B_j \dots u_n \conn(B_1,\dots,B_k)\dots))$ we also have $$z\in A_\Mt(\dots v(w_i B_j) \dots v(u_n {\conn(B_1,\dots,B_k)})\dots).$$ Using Lemma~\ref{easylemma}, since $A\in L_{\Sigma^d}(P)$ and $\Mt$ is a rexpansion of a $\Sigma^d$-deterministic PNmatrix, we conclude that $y\in D$ iff $z\in D$. Now, it is also the case that $A(\dots w_i B_j \dots u_n \conn(B_1,\dots,B_k)\dots)\in\Ax^{\inst}$ and we know that $v\in\Val_\Mt^\Ax$, so we conclude that $z\in D$. Therefore, $y\in D$ and $v^\sharp(B)\in D^\sharp_\Ax$. We conclude $\{\widetilde{\mathcal{E}}\circ v^\sharp:v^\sharp\in\Val_{\Mt^\sharp_\Ax}\}\subseteq\Val_\Mt^\Ax$ and $\vdash_\Mt^\Ax\subseteq\vdash_{\Mt^\sharp_\Ax}$.
%
%
%
%
\item[Completeness.]
Reciprocally, if $v\in\Val_\Mt^\Ax$ then $v=\widetilde{\mathcal{E}}\circ v^\sharp$ with $v^\sharp(A)=f^A_v$ for each $A\in L_\Sigma(P)$.
It is immediate, by definition of $\conn_{\Mt^\sharp_\Ax}$, that $f_v^{\conn(A_1,\dots,A_k)}\in \conn_{\Mt^\sharp_\Ax}(f_v^{A_1},\dots,f_v^{A_k})$ for every $k$-place connective $\conn\in\Sigma$ and formulas $A_1,\dots,A_k\in L_\Sigma(P)$. We conclude that $v^\sharp\in\Val_{\Mt^\sharp_\Ax}$. 
Therefore, we have $\Val_\Mt^\Ax\subseteq
\{\widetilde{\mathcal{E}}\circ v^\flat:v^\flat\in\Val_{\Mt^\flat_\Ax}\}$ and  $\vdash_{\Mt^\flat_\Ax}\subseteq\vdash_\Mt^\Ax$.\qed
\end{description}}

As intended, we have pushed the boundaries of the method in~\cite{taming} as much as we could.
Beyond the arbitrariness of the signature, and the more permissive syntactic restrictions on the axioms, we also allow a more general PNmatrix to start with. Instead of demanding it to be the two-valued Boolean matrix on the $\Sigma^d$-connectives, we simply require that it be a rexpansion of any Pmatrix. This has the advantage of applying to a large range of non-classical base logics, but also of making the method incremental, allowing us to add axioms one by one and not necessarily all at once. Further, in our method, the interpretation of the connectives not in $\Sigma^d$ is completely unrestricted, which constrasts with~\cite{taming}, where the remaining (1-place) connectives are implicitly forced to be fully non-deterministic.
%
This additional degree of freedom allowed by our method applies not only to the connectives in $\mathcal{U}$, but also to any other connectives not appearing in the structure formulas of the axioms.

\section{Worked examples}\label{sec4}

In order to show the workings and scope of the method we have put forth in Subsection~\ref{sec32}, we shall now consider a few meaningful illustrative examples. 

\begin{example}\label{pnegpq}
\em
Suppose that we want to add to the logic of classical implication a negation connective satisfying the \emph{explosion} axiom $p_1\to(\neg p_1\to p_2)$.

We consider the signature $\Sigma$ with a single 2-place connective $\to$, and a single 1-place connective $\neg$, and we start from the two-valued (P)Nmatrix $\BMt=\tuple{\{0,1\},\{1\},\cdot_\BMt}$ given by the truth-tables below.

\begin{center}
 \begin{tabular}{c | c c c }
${\to_\BMt}$ & $0$ & $1$  \\
\hline
$0$&  $ 1 $ & $ 1 $ \\
$1$ &$0$ & $ 1 $ 
\end{tabular}
\qquad\qquad
\begin{tabular}{c | c }
 & ${\neg_\BMt}$ \\
\hline
$0$& $ 0,1 $ \\
$1$ &$ 0,1 $ 
\end{tabular}
\end{center}
 
Clearly, $\to_\BMt$ corresponds to the usual matrix truth-table of classical implication. The truth-table of $\neg_\BMt$ is fully non-deterministic.

Setting $\Sigma^d$ to contain only $\to$, and $\mathcal{U}=\{\neg\}$ it is clear that $\BMt$ is $\Sigma^d$-deterministic and that the axiom is $\Sigma^d$-simple. With $\Exp=\{p_1\to(\neg p_1\to p_2)\}$, we have that $\Theta_\Exp=\{\varepsilon,\neg\}$. From Theorem~\ref{theconstruction}, the strengthening of $\vdash_\BMt$ with $\Exp$ is characterized by the PNmatrix 
$\BMt^\sharp_\Exp=\tuple{\{00,01,10,11\},\{10,11\},\cdot_{\BMt^\sharp_\Exp}}$ where:
 
 \begin{center}
 \begin{tabular}{c | c c c c}
${\to_{\BMt^\sharp_\Exp}}$ & $00$ & $01$ & $10$ & $11$  \\[2mm]
\hline
$00$&  $ 10 $ & $ 10 $&  $ 10 $ & $ \emptyset $ \\
$01$ &$10$ & $ 10 $&  $ 10 $ & $ \emptyset $ \\
$10$&  $ 00,01 $ & $ 00,01 $&  $ 10 $ & $ \emptyset $ \\
$11$ &$\emptyset$ & $ \emptyset $&  $ \emptyset $ & $ 11 $ 
\end{tabular}
\qquad\qquad
\begin{tabular}{c | c }
 & ${\neg_{\BMt^\sharp_\Exp}}$ \\[2mm]
\hline
$00$& $ 00,01 $ \\
$01$ &$ 10 $ \\
$10$& $ 00,01 $ \\
$11$ &$ 11 $ 
\end{tabular}
\end{center}

Note that, for ease of notation, we are denoting a function $f\in V^\sharp_{\Exp}$ simply by the string $f(\varepsilon)f(\neg)$. For instance, the value $01$ corresponds to the function such that $f(\varepsilon)=0$ and $f(\neg)=1$. In this example, all four possibilities correspond to truth-values of the resulting PNmatrix. The reader may refer to Example~\ref{clun} below, for a situation where this does not happen.\smallskip

For illustration purposes, let us clarify why $\neg_{\BMt^\sharp_\Exp}(10)=\{00,01\}$. Easily, if $xy\in\neg_{\BMt^\sharp_\Exp}(10)$ it is clear that $x=0$ as this is the value of the $\neg$ look-ahead provided by the value $10$. The fact that $y$ can be either $0$ or $1$ boils down to noting that $\neg_\BMt(0)=\{0,1\}$,  none of these choices being incompatible with satisfying the axiom. Namely, $10=f^{p_1}_v$ and $00=f^{\neg p_1}_v$ for any $\BMt$-valuation $v$ with $v(p_1)=1$ and $v(\neg p_1)=v(\neg\neg p_1)=0$ and classical for other formulas, whereas $10=f^{p_1}_v$ and $01=f^{\neg p_1}_v$ would result from any fully classical $\BMt$-valuation with $v(p_1)=1$, both valuations clearly in $\Val_{\BMt}^\Exp$.
Another interesting case is $\neg_{\BMt^\sharp_\Exp}(01)=\{10\}$. Easily, the $1$ on the left of $10$ is explained by the $1$ on the right of $01$. Once again, $\neg_\BMt(1)=\{0,1\}$. However, we must exclude $11$ because $01=f^A_v$ and $11=f^{\neg A}_v$ would jointly imply that $v(\neg A\to(\neg\neg A\to A))=0$ and therefore $v\notin\Val_{\BMt}^\Exp$. Similar justifications can be given, for instance, to explain why $11 \to_{\BMt^\sharp_\Exp}00=\emptyset$.

The PNmatrix 
$\BMt^\sharp_\Exp$ obtained is slightly more complex than one could expect. Note, however, that the value $11$ is isolated from the others in the sense that a valuation that assigns $11$ to some formula must assign $11$ to all formulas. 
Concretely, $\BMt^\sharp_\Exp$ has two maximal total refinements: the three-valued Nmatrix $(\BMt^\sharp_\Exp)_{\{00,01,10\}}$ one would expect, plus the trivial one-valued matrix $(\BMt^\sharp_\Exp)_{\{11\}}$ (whose only trivial valuation is irrelevant for the definition of $\vdash_\BMt^\Exp$).
\hfill$\triangle$
\end{example}

Let us now consider a slight variation on this theme.

\begin{example}\label{pnegpnegq}\em
To see the contrast with the previous example, suppose now that we want to add to the logic of classical implication a negation connective satisfying the weaker \emph{partial explosion} axiom $p_1\to(\neg p_1\to \neg p_2)$. This is a case that is out of the scope of the method in~\cite{taming}.

The setting up we need to consider is the same used in Example~\ref{pnegpq}: the same $\Sigma$, $\Sigma^d$ and $\mathcal{U}$, and the same starting PNmatrix $\BMt$. Setting now $\Exp_\neg=\{p_1\to(\neg p_1\to \neg p_2)\}$, we still have that $\Theta_{\Exp_\neg}=\{\varepsilon,\neg\}$. From Theorem~\ref{theconstruction}, the strengthening of $\vdash_\BMt$ with $\Exp_\neg$ is now characterized by the PNmatrix 
$\BMt^\sharp_{\Exp_\neg}=\tuple{\{00,01,10,11\},\{10,11\},\cdot_{\BMt^\sharp_{\Exp_\neg}}}$ where, using the same notation convention used in Example~\ref{pnegpq}, we have: 

 \begin{center}
 \begin{tabular}{c | c c c c}
${\to_{\BMt^\sharp_{\Exp_\neg}}}$ & $00$ & $01$ & $10$ & $11$  \\[2mm]
\hline
$00$&  $ 10 $ & $ 10 $&  $ 10 $ & $ \emptyset $ \\
$01$ &$10$ & $ 10,11 $&  $ 10 $ & $ 11 $ \\
$10$&  $ 00,01 $ & $ 00,01 $&  $ 10 $ & $ \emptyset $ \\
$11$ &$\emptyset$ & $ 01 $&  $ \emptyset $ & $ 11 $ 
\end{tabular}
\qquad\qquad
\begin{tabular}{c | c }
 & ${\neg_{\BMt^\sharp_{\Exp_\neg}}}$ \\[2mm]
\hline
$00$& $ 00,01 $ \\
$01$ &$ 10,11 $ \\
$10$& $ 00,01 $ \\
$11$ &$ 11 $ 
\end{tabular}
\end{center}

The PNmatrix 
$\BMt^\sharp_{\Exp_\neg}$ is more interesting than before. Note that it also has two maximal total refinements: the three-valued Nmatrix $(\BMt^\sharp_{\Exp_\neg})_{\{00,01,10\}}$ (which is precisely the same as the one obtained in Example~\ref{pnegpq}), plus the two-valued matrix $(\BMt^\sharp_{\Exp_\neg})_{\{01,11\}}$ (whose implication is classical but whose negation is always designated).
\hfill$\triangle$
\end{example}

Next, we will analyze a number of examples that appear scattered in the literature, and show how our method can be systematically used in all of them. 
We start by revisiting an example from~\cite{avronnegations}, paradigmatic of many similar examples considered by Avron and coauthors.

\begin{example}\label{clun}
\em
Let us consider strengthening the logic $\mathcal{CL}u\mathcal{N}$ from~\cite{batens1,batens2} with the \emph{double negation elimination} axiom $\neg\neg p_1\to p_1$. Actually, for the sake of simplicity, we shall consider only the $\{\neg,\to\}$-fragment of the logic.

Let $\Sigma_d$ contain a single 2-place connective $\to$, $\mathcal{U}$ contain a 1-place connective $\neg$. The (fragment of the) logic $\mathcal{CL}u\mathcal{N}$ is characterized by the Nmatrix $\Mt=\tuple{\{0,1\},\{1\},\cdot_\Mt}$ with:
 
 \begin{center}
 \begin{tabular}{c | c c c }
${\to_\Mt}$ & $0$ & $1$  \\
\hline
$0$&  $ 1 $ & $ 1 $ \\
$1$ &$0$ & $ 1 $ 
\end{tabular}
\qquad\qquad
\begin{tabular}{c | c }
 & ${\neg_\Mt}$ \\
\hline
$0$& $ 1 $ \\
$1$ &$ 0,1 $ 
\end{tabular}
\end{center}
 
It is clear that $\Mt$ is $\Sigma_d$-deterministic and that the axiom is $\Sigma^d$-simple. If we let $\mathsf{DNe}=\{\neg\neg p_1\to p_1\}$, we have 
that $\Theta_{\mathsf{DNe}}=\{\varepsilon,\neg,\neg\neg\}$. From Theorem~\ref{theconstruction}, the strengthening of $\vdash_\Mt$ with $\mathsf{DNe}$, 
which is well known to coincide with the logic $\mathcal{C}_{\min}$ of \cite{limits,taxonomy}, is characterized by the four-valued Nmatrix 
$\Mt^\sharp_{\mathsf{DNe}}=\tuple{\{010,101,110,111\},\{101,110,111\},\cdot_{\Mt^\sharp_{\mathsf{DNe}}}}$ where:

%
%
 
 \begin{center}
 \begingroup
\renewcommand{\arraystretch}{1.2} 
 \begin{tabular}{c | c c c c}
${\to_{\Mt^\sharp_{\mathsf{DNe}}}}$ & $010$ & $101$ & $110$ & $111$  \\[2mm]
\hline
$010$&  $ D^\sharp_{\mathsf{DNe}} $ & $ D^\sharp_{\mathsf{DNe}} $&  $ D^\sharp_{\mathsf{DNe}} $ & $ D^\sharp_{\mathsf{DNe}} $ \\
$101$ &$010$ & $ D^\sharp_{\mathsf{DNe}}$&  $ D^\sharp_{\mathsf{DNe}} $ & $ D^\sharp_{\mathsf{DNe}} $ \\
$110$&  $ 010 $ & $D^\sharp_{\mathsf{DNe}}$&  $ D^\sharp_{\mathsf{DNe}} $ & $ D^\sharp_{\mathsf{DNe}} $ \\
$111$ &$010$ & $D^\sharp_{\mathsf{DNe}} $&  $D^\sharp_{\mathsf{DNe}} $ & $ D^\sharp_{\mathsf{DNe}} $ 
\end{tabular}
\qquad\qquad
\begin{tabular}{c | c }
 & ${\neg_{\Mt^\sharp_{\mathsf{DNe}}}}$ \\[2mm]
\hline
$010$& $ 101 $ \\
$101$ &$ 010 $ \\
$110$& $ 101 $ \\
$111$ &$ 110,111 $ 
\end{tabular}
\endgroup
\end{center}

Above, for ease of notation, we are denoting a function $f\in V^\sharp_{{\mathsf{DNe}}}$ simply by the string $f(\varepsilon)f(\neg)f(\neg\neg)$.
As this is a new feature in our row of examples, it is worth explaining why only four of the eight possible such functions appear as truth-values of the resulting Nmatrix. Namely, $000,001,100$ are all unattainable as $f_v^A$ in the Nmatrix $\Mt$ since $\neg_{\Mt^\sharp_{\mathsf{DNe}}}(0)=1$. The remaining string $011$ is excluded for more interesting reasons, as any $\Mt$-valuation $v$ with  $001=f_v^A$ makes $v(\neg\neg A\to A)=0$ and thus $v\notin\Val_{\Mt}^{\mathsf{DNe}}$.\smallskip

This example shows that our method, though very general, may not be as tight as possible. It is a mandatory topic for further research to best understand how to equate the equivalence between this Nmatrix and the three-valued Nmatrix from~\cite{avronnegations}.\smallskip

 If we want to strengthen the resulting logic,  $\mathcal{C}_{\min}$, with the \emph{double negation introduction} axiom $p_1\to \neg\neg p_1$, we can readily apply Theorem~\ref{theconstruction} to $\Mt^\sharp_{\mathsf{DNe}}$ and $\mathsf{DNi}=\{ p_1\to \neg\neg p_1\}$, obtaining (up to renaming of the truth-values) the three-valued Nmatrix $\mathbb{N}=(\Mt^\sharp_{\mathsf{DNe}})^\sharp_{\mathsf{DNi}}=\tuple{\{01,10,11\},\{10,11\},\cdot_{\mathbb{N}}}$ where:
 
  \begin{center}
 \begin{tabular}{c | c c c c}
${\to_{\mathbb{N}}}$ & $01$ & $10$ & $11$  \\
\hline
$01$&  $10,11 $ & $10,11 $&  $ 10,11$ \\
$10$ &$01$ & $ 10,11$&   $ 10,11 $ \\
$11$ &$01$ & $10,11 $&   $ 10,11 $ 
\end{tabular}
\qquad\qquad
\begin{tabular}{c | c }
 & ${\neg_{\mathbb{N}}}$ \\
\hline
$01$& $ 10 $ \\
$10$ &$ 01$ \\
$11$ &$ 11 $ 
\end{tabular}
\end{center}

Note that, by construction, the Nmatrix $\mathbb{N}$ has three values $g:\Theta_{\mathsf{DNi}}\to V^\sharp_{\mathsf{DNe}}$ which, 
given that $\Theta_{\mathsf{DNi}}=\{\varepsilon,\neg,\neg\neg\}$, can be written in string notation as $g(\varepsilon)g(\neg)g(\neg\neg)$,  corresponding to the strings $010101010,101010101,111111111$. Clearly, each of them can be named simply by their first two symbols.

It is interesting to further note that this Nmatrix is isomorphic to $\Mt^\sharp_{\mathsf{DNe}\cup\mathsf{DNi}}$. This is a particularly happy case as, in general, adding axioms incrementally, instead of all at once (as in~\cite{taming}), will yield an equivalent PNmatrix but not necessarily the same, often with more truth-values.\hfill$\triangle$
\end{example}

We now consider a more elaborate example in the family of paraconsistent logics,  as also tackled by Avron and coauthors, which is developed in detail in~\cite{taming}.

\begin{example}\label{agata}\em
As in Example 5.1 of~\cite{taming}, we want to characterize the logic obtained by adding two additional 1-place connectives $\neg,\circ$ to positive classical logic, subject to the set of axioms $\Ax$ containing:
$$p_1\ou \neg p_1$$ $$p_1\to(\neg p_1\to(\circ p_1 \to p_2))$$ $$\circ p_1\ou (p_1 \e \neg p_1)$$ $$\circ p_1\to\circ(p_1\e p_2)$$ $$(\neg p_1\ou \neg p_2)\to\neg(p_1\e p_2)$$

  Let $\Sigma_d$ contain the three 2-place connectives $\e,\ou,\to$, and $\mathcal{U}$ contain the two 1-place connectives $\neg,
  \circ$ and consider the Nmatrix  $\CMt=\tuple{\{0,1\},\{1\},\cdot_\CMt}$ with:
 
 \begin{center}
  \begin{tabular}{c | c c c }
${\e_\CMt}$ & $0$ & $1$  \\
\hline
$0$&  $ 0 $ & $ 0 $ \\
$1$ &$0$ & $ 1 $ 
\end{tabular}\qquad
  \begin{tabular}{c | c c c }
${\ou_\CMt}$ & $0$ & $1$  \\
\hline
$0$&  $ 0$ & $ 1 $ \\
$1$ &$1$ & $ 1 $ 
\end{tabular}\qquad
 \begin{tabular}{c | c c c }
${\to_\CMt}$ & $0$ & $1$  \\
\hline
$0$&  $ 1 $ & $ 1 $ \\
$1$ &$0$ & $ 1 $ 
\end{tabular}
\qquad
\begin{tabular}{c | c | c}
 & ${\neg_\CMt}$  & ${\circ_\CMt}$ \\
\hline
$0$& $ 0,1  $&$ 0,1 $ \\
$1$ &$ 0,1 $&$ 0,1 $ 
\end{tabular}
\end{center}

It is clear that $\CMt$ is $\Sigma_d$-deterministic and that the axioms are all $\Sigma^d$-simple. 
Further, we get $\Theta_{\Ax}=\{\varepsilon,\neg,\circ\}$. From Theorem~\ref{theconstruction}, the strengthening $\vdash^\Ax_\CMt$
is characterized by the PNmatrix 
$\CMt^\sharp_{\Ax}=\tuple{\{011,101,110,111\},\{101,110,111\},\cdot_{\CMt^\sharp_{\Ax}}}$ where:

%
%
%
 \begin{center}
  \begin{tabular}{c | c c c c}
${\e_{\CMt^\sharp_\Ax}}$ & $011$ & $101$ & $110$ & $111$  \\[2mm]
\hline
$011$&  $011$ & $011$&   $011$  & $ \emptyset $ \\
$101$ &$011$ & $101$ &  $\emptyset$ & $ \emptyset $ \\
$110$&  $ 011 $ & $\emptyset$ & $110$ & $ \emptyset$ \\
$111$ &$\emptyset$ & $\emptyset $&  $\emptyset $ & $ 111 $ 
\end{tabular}
\qquad
 \begin{tabular}{c | c c c c}
${\ou_{\CMt^\sharp_\Ax}}$ & $011$ & $101$ & $110$ & $111$  \\[2mm]
\hline
$011$&  $011$ & $101$&   $110$  & $ \emptyset $ \\
$101$ &$101$ & $101$ &  $\emptyset$ & $ \emptyset $ \\
$110$&  $110 $ & $\emptyset$ & $110$ & $ \emptyset$ \\
$111$ &$\emptyset$ & $\emptyset $&  $\emptyset $ & $ 111 $ 
\end{tabular}
\end{center}

\begin{center}
 \begin{tabular}{c | c c c c}
${\to_{\CMt^\sharp_\Ax}}$ & $011$ & $101$ & $110$ & $111$  \\[2mm]
\hline
$011$&  $101,110$ & $101$&   $110$  & $ \emptyset $ \\
$101$ &$011$ & $101$ &  $\emptyset$ & $ \emptyset $ \\
$110$&  $ 011 $ & $\emptyset$ & $110$ & $ \emptyset$ \\
$111$ &$\emptyset$ & $\emptyset $&  $\emptyset $ & $ 111 $ 
\end{tabular}
\qquad
\begin{tabular}{c | c | c}
 & ${\neg_{\CMt^\sharp_\Ax}}$ & ${\circ_{\CMt^\sharp_\Ax}}$ \\[2mm]
\hline
$011$& $ 101,110 $ & $ 101,110 $\\
$101$ &$ 011 $ & $ 101$\\
$110$& $ 110 $ & $ 011 $\\
$111$ &$ 111 $ & $ 111 $
\end{tabular}
\end{center}

For ease of notation, once again, we are denoting a function $f\in V^\sharp_{\Ax}$ simply by the string $f(\varepsilon)f(\neg)f(\circ)$.

Notably, the PNmatrix $\CMt^\sharp_\Ax$ is slightly different from the PNmatrix obtained using the method in~\cite{taming}. Still, it is easy to see that $\CMt^\sharp_\Ax$ has two maximal total refinements: the three-valued PNmatrix $(\CMt^\sharp_{\Ax})_{\{011,101,110\}}$ (which is an equivalent refinement of the PNmatrix in~\cite{taming} maximizing the partiality), plus the trivial one-valued matrix $(\CMt^\sharp_{\Ax})_{\{111\}}$.
\hfill$\triangle$

\end{example}

Our next example deals with Nelson-like logics and twist-structures.

\begin{example} \label{nelson}\em

The addition of a paraconsistent Nelson-like~\cite{nelson,vakarelovnelson,odintsovBook} \emph{strong negation} $\sim$  to a given intermediate logic (as in~\cite{Kracht1998OnEO}) can be easily captured by our construction. 

Let $\Sigma_d$ be a signature containing binary connectives $\e,\ou,\to$, and $\mathcal{U}$ contain the 1-place connective $\sim$,
and consider an Nmatrix $\Mt=\tuple{V,\{1\},\cdot_\Mt}$ whose 
$\{\e,\ou,\to\}$-reduct of $\Mt$, dubbed $\mathbb{N}$, 
is an \emph{implicative lattice} \cite{odintsovBook}, and such that $\sim_\Mt(x)=V$ for every $x\in V$, and let $\Ax$ contain:
$$\sim\sim p_1\to p_1\qquad\qquad  p_1\to\,\sim\sim p_1$$
$$\sim(p_1\ou p_2)\to(\sim p_1\e\sim p_2)\qquad\qquad(\sim p_1\,\e\sim p_2)\to\,\sim(p_1\ou p_2)$$
$$\sim(p_1\e p_2)\to(\sim p_1\ou\sim p_2)\qquad\qquad (\sim p_1\ou\sim p_2)\to\, \sim(p_1\e p_2)$$
$$\sim(p_1\to p_2)\to(p_1\e\sim p_2)\qquad\qquad  (p_1\e\sim p_2)\to \,\sim(p_1\to p_2)$$

\vspace*{2mm}
Clearly, the axioms in $\Ax$ are $\Sigma_d$-simple
 and $\Theta_\Ax=\{\varepsilon,\sim,\sim\sim\}$.
From Theorem~\ref{theconstruction}, $\vdash^\Ax_\Mt$ is characterized by the matrix 
$\Mt^\sharp_{\Ax}=\tuple{V^\sharp_\Ax,D^\sharp_\Ax,\cdot_{\Mt^\sharp_{\Ax}}}$ isomorphic to the well known full twist-structure $\mathbb{N}^{\bowtie}$ over $\mathbb{N}$ (see~\cite{odintsovBook}). 
 Namely, we have $V^\sharp_\Ax=\{f\in V^{\{\varepsilon,\sim,\sim\sim\}}: f(\varepsilon)=f(\sim\sim)\}$. For simplicity, we can represent each such function $f\in V^\sharp_\Ax$ simply by the pair $(f(\varepsilon),f(\sim))$. 
 Hence, we have:
\begin{itemize} 
\item $V^\sharp_\Ax=V\times V$ and $D^\sharp_\Ax=\{1\}\times V$,  
\item $(x_1,y_1)\e_{\Mt^\sharp_\Ax}(x_1,y_1)=(x_1\e_\Mt x_2,y_1\ou_\Mt y_2)$, 
\item $(x_1,y_1)\ou_{\Mt^\sharp_\Ax}(x_1,y_1)=(x_1\ou_\Mt x_2,y_1\e_\Mt y_2)$, 
\item $(x_1,y_1)\to_{\Mt^\sharp_\Ax}(x_2,y_2)=(x_1\to_\Mt x_2,x_1\e_\Mt y_2)$, and 
\item $\sim_{\Mt^\sharp_\Ax}({x,y})=(y,x)$. 
\end{itemize}
When we take $\mathbb{N}$ to be the two-valued Boolean matrix, and using now $xy$ instead of $(x,y)$, we obtain, 
$\Mt^\sharp_{\Ax}=\tuple{\{00,01,10,11\},\{10,11\},\cdot_{\Mt^\sharp_{\Ax}}}$ where: 
 \begin{center}
 \begin{tabular}{c | c c c c}
${\e_{\Mt^\sharp_\Ax}}$ & $00$ & $01$ & $10$ & $11$  \\[2mm]
\hline
$00$&  $ 00 $ & $ 01 $&  $ 00 $ & $ 01 $ \\
$01$ &$01$ & $ 01 $&  $ 01 $ & $01$ \\
$10$&  $00$ & $ 01$&  $ 10 $ & $ 11 $ \\
$11$ &$01$ & $ 01 $&  $ 11 $ & $ 11 $ 
\end{tabular}
\quad
 \begin{tabular}{c | c c c c}
${\ou_{\Mt^\sharp_\Ax}}$ & $00$ & $01$ & $10$ & $11$  \\[2mm]
\hline
$00$&  $ 00 $ & $ 00 $&  $ 10 $ & $ 10 $ \\
$01$ &$00$ & $01 $&  $ 10 $ & $ 11 $ \\
$10$&  $ 10 $ & $ 10 $&  $ 10 $ & $ 10 $ \\
$11$ &$10$ & $11 $&  $ 10 $ & $ 11 $ 
\end{tabular}
\end{center}

\begin{center}
  \begin{tabular}{c | c c c c}
${\to_{\Mt^\sharp_\Ax}}$ & $00$ & $01$ & $10$ & $11$  \\[2mm]
\hline
$00$&  $ 10 $ & $ 10 $&  $ 10 $ & $ 10 $ \\
$01$ &$10$ & $ 10 $&  $ 10 $ & $ 10$ \\
$10$&  $ 00 $ & $ 01 $&  $ 10 $ & $11$ \\
$11$ &$00$ & $01$&  $10$ & $11$ 
\end{tabular}\qquad\quad
\begin{tabular}{c | c }
 & ${\sim_{\Mt^\sharp_\Ax}}$ \\[2mm]
\hline
$00$& $ 00 $ \\
$01$ &$ 10 $ \\
$10$& $ 01 $ \\
$11$ &$ 11 $ 
\end{tabular}
\end{center}

Note this semantics coincides precisely with the semantic extension of Belnap's four-valued logic~\cite{Belnap19774val,Belnapcomputerthink} with \emph{true implication} of Avron~\cite{AvronValue4Values}.

If we further impose the axiom $$\sim p_1\to(p_1\to p_2)$$
we obtain corresponding explosive versions of Nelson's construction. 
Making $\Ax'=\Ax\cup\{\sim p_1\to(p_1\to p_2)\}$,
the resulting twist-structure is now a refinement resulting from isolating the truth-value $(1,1)$,
 i.e., such that for $\ast\in\{\e,\ou,\to\}$ we have $(1,1)\ast_{\Mt^\sharp_\Ax}(x,y)=(x,y)\ast_{\Mt^\sharp_\Ax}(1,1)=\emptyset$ if $(x,y)\neq(1,1)$. 
Concretely, if we take $\mathbb{N}$ to be the two-valued Boolean matrix, again, we obtain the Pmatrix
$\Mt^\sharp_{\Ax'}=\tuple{\{00,01,10,11\},\{10,11\},\cdot_{\Mt^\sharp_{\Ax'}}}$ where:

 \begin{center}
 \begin{tabular}{c | c c c c}
${\e_{\Mt^\sharp_{\Ax'}}}$ & $00$ & $01$ & $10$ & $11$  \\[2mm]
\hline
$00$&  $ 00 $ & $ 01 $&  $ 00 $ & $\emptyset $ \\
$01$ &$01$ & $ 01 $&  $ 01 $ & $\emptyset$ \\
$10$&  $00$ & $ 01$&  $ 10 $ & $ \emptyset $ \\
$11$ &$\emptyset$ & $ \emptyset $&  $ \emptyset $ & $ 11 $ 
\end{tabular}
\quad
 \begin{tabular}{c | c c c c}
${\ou_{\Mt^\sharp_{\Ax'}}}$ & $00$ & $01$ & $10$ & $11$  \\[2mm]
\hline
$00$&  $ 00 $ & $ 00 $&  $ 10 $ & $ \emptyset $ \\
$01$ &$00$ & $01 $&  $ 10 $ & $\emptyset $ \\
$10$&  $ 10 $ & $ 10 $&  $ 10 $ & $\emptyset $ \\
$11$ &$\emptyset$ & $\emptyset$&  $\emptyset $ & $ 11 $ 
\end{tabular}
\end{center}

\begin{center}
  \begin{tabular}{c | c c c c}
${\to_{\Mt^\sharp_{\Ax'}}}$ & $00$ & $01$ & $10$ & $11$  \\[2mm]
\hline
$00$&  $ 10 $ & $ 10 $&  $ 10 $ & $\emptyset$ \\
$01$ &$10$ & $ 10 $&  $ 10 $ & $ \emptyset$ \\
$10$&  $ 00 $ & $ 01 $&  $ 10 $ & $\emptyset$ \\
$11$ &$\emptyset$ & $\emptyset$&  $\emptyset$ & $11$ 
\end{tabular}\quad\qquad
\begin{tabular}{c | c }
 & ${\neg_{\Mt^\sharp_{\Ax'}}}$ \\[2mm]
\hline
$00$& $ 00 $ \\
$01$ &$ 10 $ \\
$10$& $ 01 $ \\
$11$ &$ 11 $ 
\end{tabular}
\end{center}

%
%
Easily, $\Mt^\sharp_{\Ax'}$ has two maximal total refinements: the three-valued matrix $(\Mt^\sharp_{\Ax'})_{\{00,01,10\}}$, plus the trivial one-valued matrix $(\Mt^\sharp_{\Ax'})_{\{11\}}$. Expectedly,  we have that
$(\Mt^\sharp_{\Ax'})_{\{00,01,10\}}$ is precisely the matrix characterizing the three-valued logic of Vakarelov~\cite{vakarelovnelson,Kracht1998OnEO} (which coincides with $\vdash^{\Ax'}_\Mt$, and is known to be translationally equivalent to \L ukasiewicz's three-valued logic).
\hfill$\triangle$
\end{example}

Next, we will show, by means of an example, that our method subsumes the idea of \emph{swap-structure semantics} put forth in~\cite{carcon,coniglioswap}.

\begin{example}\label{swap}\em
As in~\cite{coniglioswap}, we consider obtaining a semantic characterization of the non-normal modal logic $\mathcal{T}$ of Kearns~\cite{kearns}, which coincides with the logic $\mathcal{S}_a+$ of Ivlev~\cite{Ivlev}. This can be done by using our method to characterize the logic obtained by a 1-place connective $\square$ to the $\{\neg,\to\}$-fragment of classical logic, further demanding the 
$\Tm$ axioms of~\cite{coniglioswap}, namely:
$$\square(p_1\to p_2)\to (\square p_1 \to \square  p_2)$$
$$ \square(p_1\to p_2)\to (\square \neg p_2 \to \square \neg p_1)$$
$$  \neg\square\neg(p_1\to p_2)\to (\square p_1 \to \neg\square\neg p_2)$$
$$ \square \neg p_1 \to \square (p_1\to p_2)$$
$$ \square p_2 \to \square (p_1\to p_2)$$
$$ \square \neg (p_1\to p_2) \to \square \neg p_2$$
$$ \square \neg (p_1\to p_2) \to \square p_1$$
$$ \square p_1\to p_1$$
$$ \square p_1 \to \square \neg \neg p_1$$
$$\square\neg \neg p_1\to \square p_1 $$

Let $\Sigma_d$ contain $\to$, and $\mathcal{U}=\{\neg,\square\}$. Take the Nmatrix $\DMt=\tuple{\{0,1\},\{1\},\cdot_\DMt}$ with:
 
 \begin{center}
 \begin{tabular}{c | c c c }
${\to_\DMt}$ & $0$ & $1$  \\
\hline
$0$&  $ 1 $ & $ 1 $ \\
$1$ &$0$ & $ 1 $ 
\end{tabular}
\qquad\qquad
\begin{tabular}{c | c |c }
 & ${\neg_\DMt}$& ${\square_\DMt}$ \\
\hline
$0$& $ 1 $& $0,1$ \\
$1$ &$ 0 $ & $0,1$
\end{tabular}
\end{center}

Clearly the axioms in $\Tm$ are $\Sigma_d$-simple.  
Furthermore, now, we have that $\Theta_\Tm=\{\varepsilon\}\cup\prfx(\{\square,\square\neg,\neg\square\neg,\square\neg\neg\})=\{\varepsilon,\neg,\neg\square,\neg\square\neg,\square,\square\neg,\square\neg\neg\}$.
Note that for any $f\in V^\sharp_\Tm$ and $\neg w\in \Theta_\Tm$ we have $f(\neg w)=1-f(w)$. 
Note also that due to the last two axioms of $\Tm$, it follows that $f(\square\neg\neg)=f(\square)$ for any $f\in V^\sharp_\Tm$.
Hence, we can represent each $f$ simply by the string $f(\varepsilon)f(\square)f(\square \neg)$.
Further, note that the antepenultimate axiom $ \square p_1\to p_1$ guarantees both that $f(\square)\leq f(\varepsilon)$ and 
$f(\square \neg)\leq f(\neg)=1-f(\varepsilon)$.  
Now, applying Theorem~\ref{theconstruction}, we conclude that the strengthening $\vdash^\Tm_\DMt$
is characterized by the four-valued Nmatrix given by
$\DMt^\sharp_{\Tm}=\tuple{\{000,001,100,110\},\{100,110\},\cdot_{\DMt^\sharp_{\Tm}}}$ where: 
 
 \begin{center}
  \begin{tabular}{c | c c c c}
${\to_{\DMt^\sharp_{\Tm}}}$ & $000$ & $001$ & $100$ & $110$  \\[2mm]
\hline
$000$&  $100,110$ & $100$&   $100,110$  & $110$ \\
$001$ &$110$ & $110$ &  $110$ & $110$ \\
$100$&  $000$ & $000$ & $100,110$ & $110$ \\
$110$ &$000$ & $001 $&  $100$ &$110$ 
\end{tabular}
\qquad
\begin{tabular}{c | c | c}
 & ${\neg_{\DMt^\sharp_\Tm}}$ & ${\square_{\DMt^\sharp_\Tm}}$ \\[2mm]
\hline
$000$& $ 100 $ & $000,001$\\
$001$ &$ 110 $ & $000,001$\\
$100$& $ 000 $ & $000,001$\\
$110$ &$ 001 $ & $ 100,110$
\end{tabular}
\end{center}
It is straightforward to check that this Nmatrix is isomorphic to the Kearns and Ivlev semantics~\cite{kearns,Ivlev}, also recovered in~\cite{coniglioswap}, by renaming the truth-values $000,001,100,110$ by $f,F,t,T$, respectively.\hfill$\triangle$
\end{example}

We finish this section with another example, starting from a non-classical base, namely, {\L}ukasiewicz's five-valued logic.

\begin{example}\label{L53}\em
We start from {\L}ukasiewicz's logic $\mathcal{L}_5$ and strengthen it by axiom $((p_1\to \neg p_1)\to p_1)\to p_1$ in order to obtain {\L}ukasiewicz's three-valued logic $\mathcal{L}_3$ (see, for instance,~\cite{Woj,gott}). In this case, no new connectives are added.

%
%

Let $\Sigma_d$ contain the 2-place connective $\to$, and also the 1-place connective $\neg$, and let $\mathcal{U}=\emptyset$.
Let also $\Ax=\{((p_1\to \neg p_1)\to p_1)\to p_1\}$.
Consider the five-valued matrix $\mathbb{L}_5=\tuple{\{0,\frac{1}{4},\frac{1}{2},\frac{3}{4},1\},\{1\},\cdot_{\mathbb{L}_5}}$ with:
 
 \begin{center} 
 \begingroup
\renewcommand{\arraystretch}{1.2}
 \begin{tabular}{c | c c c c c}
${\to_{\mathbb{L}_5}}$ & $0$&$\frac{1}{4}$&$\frac{1}{2}$&$\frac{3}{4}$&$1$  \\[1mm]
\hline
$0$ 		    & $1$&   $1$&                $1$&               $1$&             $1$  \\
$\frac{1}{4}$ & $\frac{3}{4}$&$1$&$1$&$1$&$1$  \\
$\frac{1}{2}$& $\frac{1}{2}$&$\frac{3}{4}$&$1$&$1$&$1$  \\
$\frac{3}{4}$& $\frac{1}{4}$&$\frac{1}{2}$&$\frac{3}{4}$&$1$&$1$  \\
$1$              & $0$&$\frac{1}{4}$&$\frac{1}{2}$&$\frac{3}{4}$&$1$  
\end{tabular}\qquad\quad
  \begin{tabular}{c | c }
 & ${\neg_{\mathbb{L}_5}}$   \\[1mm]
\hline
$0$ 		    & $1$\\  
$\frac{1}{4}$ & $\frac{3}{4}$ \\
$\frac{1}{2}$& $\frac{1}{2}$  \\
$\frac{3}{4}$& $\frac{1}{4}$  \\
$1$              & $0$ 
\end{tabular}
\endgroup
\end{center}
Clearly the axiom is $\Sigma_d$-simple and $\Theta_\Ax=\{\varepsilon\}$.
Hence we represent any $f\in V^\sharp_\Ax$  simply by $f(\varepsilon)$.
 From Theorem~\ref{theconstruction}, the strengthening $\vdash^\Ax_{\mathbb{L}_5}$
is characterized by the well-known three-valued matrix
$({\mathbb{L}_5})^\sharp_{\Ax}=\mathbb{L}_3=\tuple{\{0,\frac{1}{2},1\},\{1\},\cdot_{\mathbb{L}_3}}$ where: 
 
 \begin{center} 
 \begin{tabular}{c | c c c c c}
${\to_{\mathbb{L}_3}}$ & $0$&$\frac{1}{2}$&$1$  \\[1mm]
\hline
$0$ 		    & $1$&                 $1$&                       $1$  \\
$\frac{1}{2}$& $\frac{1}{2}$& $1$&$1$  \\
$1$              & $0$&$\frac{1}{2}$&$1$  
\end{tabular}\qquad\quad
  \begin{tabular}{c | c }
 & ${\neg_{\mathbb{L}_3}}$   \\[1mm]
\hline
$0$ 		    & $1$\\  
$\frac{1}{2}$& $\frac{1}{2}$  \\
$1$              & $0$ 
\end{tabular}

\end{center}

%
%
%
%
%
%
\hfill$\triangle$
\end{example}

Examples~\ref{pnegpq},~\ref{agata},~\ref{swap} are also covered by the method in~\cite{taming}. 
The two-valued based case of Example~\ref{nelson} could also be obtained using~\cite{taming}, but not the general case we deal with, over an arbitrary implicative lattice. Example~\ref{clun}, the way it is formulated, is outside the scope of~\cite{taming}, not only because it starts from a Nmatrix where negation is not fully non-deterministic, but also because we are adding one axiom and then another. Examples~\ref{pnegpnegq},~\ref{L53} are also not covered by~\cite{taming}. Namely, Example~\ref{pnegpnegq} uses an axiom which does not respect their syntactic criteria, and Example~\ref{L53} uses a five-valued non-classical matrix.

\section{Analytic multiple-conclusion calculi}\label{sec5}

In the work of Arnon Avron on Nmatrices and rexpansions, obtaining a concise semantics for a logic (typically in the form of a Nmatrix) is not an end in itself but a means for obtaining (sequent-like) analytic calculi for that logic~\cite{AvronBK07,Avron2012atLICS,avroncutfree2013}. In other works (e.g.,~\cite{taming,Baaz2013}), the semantics (typically in the form of a PNmatrix) is not a basis for obtaining a calculus but it is still instrumental in proving its analyticity (when the PNmatrix is total). 
In this paper, so far, we have not worried about proof-theoretic aspects. Therefore, this is a good point for applying to our previous construction the techniques developed in~\cite{synt,wollic19} for obtaining analytic multiple-conclusion calculi for logics defined by finite PNmatrices, under a reasonable expressiveness proviso. This contrasts with the above mentioned results for sequent-like calculi~\cite{avroncutfree2013,Baaz2013,taming}, for which partiality seems to devoid them of a usable (even if generalized) subformula property capable of guaranteeing analyticity (and elimination of non-analytic cuts).\smallskip

In what follows, we will consider so-called \emph{multiple-conclusion calculi}, a simple generalization of Hilbert-style calculi with (schematic) inference rules of the form $\frac{\;\Gamma\;}{\Delta}$ where $\Gamma$ (\emph{premises} read conjunctively, as usual) and $\Delta$ (\emph{conclusions} read disjunctively) are sets of formulas. Such calculi were studied by Shoesmith and Smiley in~\cite{ShoesmithSmiley}, and have very interesting properties. 
A set $R$ of such multiple-conclusion rules induces a consequence relation $\der_R$ by means of an adequate notion of proof, simply defined as a tree-like version of Hilbert-style proofs. We shall show some illustrative examples later, but refer the reader to~\cite{ShoesmithSmiley,synt,wollic} for details. As usual, we say that $R$ constitutes a calculus for a consequence relation $\der$ if $\der_R{=}\der$.

A set $\mathcal{S}\subseteq L_\Sigma(\{p\})$ induces a simple notion of a generalized subformula: $A$ is a \emph{$\mathcal{S}$-subformula of $B$} if 
$A\in\sub_{\mathcal{S}}(B)=\sub(B)\cup\{S(B'):S\in\mathcal{S},B'\in\sub(B)\}$. We say that \emph{$R$ is an $\mathcal{S}$-analytic calculus} if whenever $\Gamma\der_R\Delta$ then there exists a proof of $\Delta$ from $\Gamma$ using only formulas in $\sub_{\mathcal{S}}(\Gamma\cup\Delta)$. For finite $\mathcal{S}$, we have shown in~\cite{synt,wollic} that $\mathcal{S}$-analyticity implies that deciding $\der_R$ is in $\mathsf{coNP}$, and that proof-search can be implemented in $\mathsf{EXPTIME}$.\smallskip

Producing analytic calculi for logics characterized by finite PNmatrices is possible, as long as the syntax of the logic is sufficiently expressive (a notion intimately connected with the methods in~\cite{ShoesmithSmiley,AvronBK07,avroncutfree2013,dyadic,taming}). Fix a $\Sigma$-PNmatrix $\Mt=\tuple{V,D,\cdot_\Mt}$.
A pair of non-empty sets of elements $\emptyset\neq X,Y\subseteq V$ are \emph{separated}, $X\# Y$, if $X\subseteq D$ and $Y\subseteq V\setminus{D}$, or vice versa. A  formula $S$ with $\var(S)\subseteq\{p\}$ with $S_\Mt(z)\neq\emptyset$ for every $z\in V$, and such that $S_\Mt(x)\# S_\Mt(y)$ is said to \emph{separate} $x$ and $y$, and 
 called a \emph{(monadic) separator}. 
The PNmatrix $\Mt$ is said to be \emph{monadic} if there is a separator for every pair of distinct truth-values.\smallskip

Granted a monadic PNmatrix $\Mt=\tuple{V,D,\cdot_\Mt}$ and some set $\mathcal{S}=\{S^{xy}:x,y\in V, x\neq y\}$ of monadic separators for $\Mt$ such that each $S^{xy}$ separates $x$ and $y$, a
 \emph{discriminator} for $\Mt$ is the
$V$-indexed family
$\widetilde{\mathcal{S}}=\{\widetilde{\mathcal{S}}_x\}_{x\in V}$,
with each
$\widetilde{\mathcal{S}}_x=\{S^{xy}:y\in V\setminus\{x\}\}$. 
Each $\widetilde{\mathcal{S}}_x$ is naturally partitioned into 
  $
 \Omega_x=\{S\in \widetilde{\mathcal{S}}_x:S_\Mt(x)\subseteq D\} \text{ and } 
 \mho_x=\{S\in \widetilde{\mathcal{S}}_x:S_\Mt(x)\subseteq V\setminus D\}.
$ This partition is easily seen to characterize precisely each of the truth-values of $\Mt$.\smallskip

Given $X\subseteq V$, we denote by 
  $\Omega^*_X$ any of the possible sets built by choosing one element from 
 each $\Omega_x$ for $x\in X$, that is, $\Omega^*_X\subseteq\bigcup_{x\in X}\Omega_x$ is such that $\Omega^*_X\cap\Omega_x\neq \emptyset$ for each $x\in X$.
 Analogously, we let $\mho^*_X$ denote any of the possible sets built by choosing one element from 
 each 
 $\mho_x$ 
 for $x\in X$, that is, $\mho^*_X\subseteq\bigcup_{x\in X}\mho_x$ is such that $\mho^*_X\cap\mho_x\neq \emptyset$ for each $x\in X$. 
 The following result is taken from~\cite{wollic19}.

\begin{theorem}\label{analyticity}
Let $\Mt=\tuple{V,D,\cdot_\Mt}$ be a monadic PNmatrix  with discriminator $\widetilde{\mathcal{S}}$.
Then, $R_\Mt^{\widetilde{\mathcal{S}}}=R_{\exists}\cup R_{\mathsf{D}}\cup R_\Sigma \cup {R_{\mathcal{T}}}$ is an $\mathcal{S}$-analytic calculus for $\der_\Mt$, where:

\begin{itemize}
 \item $R_{\exists}$ contains, for each $X\subseteq V$ and each possible $\mho^*_{X}$ and $\Omega^*_{V\setminus X}$, the rule
$$\frac{\;\mho^*_{X}(p)\;}{\Omega^*_{V\setminus X}(p)}$$ 
 
   
 \item $R_{\mathsf{D}}$
    contains, for each $x\in V$, the rule $$\frac{\Omega_x(p)}{\;p,\mho_x(p)\;}\mbox{ if } x\in D \quad\mbox{ or }\quad 
  \frac{\;\Omega_x(p),p\;}{\mho_x(p)}\mbox{ if } x\notin D$$
 
 \item $R_\Sigma=\bigcup_{\conn\in\Sigma}R_\conn$ where, for $\conn\in \Sigma^{(k)}$, $R_\conn$ contains, for each
 $x_1,\ldots,x_k\in V$ and  $y\notin \conn_\Mt(x_1,\ldots,x_k)$, the rule  
$$ 
\frac{\;\bigcup\limits_{1\leq i\leq k}\Omega_{x_i}(p_i)\,,\, \Omega_y(\conn(p_1\dots,p_k))\;}{\bigcup\limits_{1\leq i\leq k}\mho_{x_i}(p_i)\,,\,\mho_y(\conn(p_1\dots,p_k))}$$

 \item 
$R_{\mathcal{T}}$ contains, for each $X\subseteq V$ with $X\notin \mathcal{T}_\Mt$, the rule

 $$\frac{\bigcup\limits_{x_i\in X}\Omega_{x_i}(p_i)}{\bigcup\limits_{x_i\in X}\mho_{x_i}(p_i)}.$$

 \end{itemize}

\end{theorem}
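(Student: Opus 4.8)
The plan is to prove the two inclusions $\der_{R_\Mt^{\widetilde{\mathcal{S}}}}\subseteq\der_\Mt$ and $\der_\Mt\subseteq\der_{R_\Mt^{\widetilde{\mathcal{S}}}}$ separately, obtaining $\mathcal{S}$-analyticity as a by-product of how the second one is proved. Throughout, the single structural fact doing the work is the one recorded right after the definition of the discriminator: for each $x\in V$, the pair $(\Omega_x,\mho_x)$ \emph{characterizes} $x$, i.e.\ if $v\in\Val_\Mt$ satisfies $v(S(A))\in D$ for all $S\in\Omega_x$ and $v(S(A))\notin D$ for all $S\in\mho_x$, then $v(A)=x$; this is immediate from each $S^{xy}$ separating $x$ and $y$ together with $S_\Mt(z)\neq\emptyset$ for every $z$.

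For soundness I would simply check that every instance of every rule is sound with respect to $\der_\Mt$. For $R_\exists$ this is a case split on whether $v(p)$ lies in $X$; for $R_{\mathsf{D}}$ a case split on whether $v(p)=x$ (using separation to locate $v(p)$ among the $\mho_x$-values when $v(p)\neq x$); for $R_\Sigma$, if a valuation satisfied all premises but no conclusion the characterization would pin its values at $p_i$ to $x_i$ and at $\conn(p_1,\dots,p_k)$ to $y$, contradicting $y\notin\conn_\Mt(x_1,\dots,x_k)$; and for $R_{\mathcal{T}}$ the same reasoning would force $\{v(p_i):x_i\in X\}=X\subseteq v(L_\Sigma(P))$, so $X\in\mathcal{T}_\Mt$, a contradiction. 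Since proofs compose sound rules, this gives $\der_{R_\Mt^{\widetilde{\mathcal{S}}}}\subseteq\der_\Mt$.

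For completeness and analyticity together, I would fix $\Gamma,\Delta$, set $\Theta=\sub_{\mathcal{S}}(\Gamma\cup\Delta)$, and assume there is \emph{no} proof of $\Delta$ from $\Gamma$ using only formulas in $\Theta$; the goal is to produce $\bar v\in\Val_\Mt$ with $\bar v(\Gamma)\subseteq D$ and $\bar v(\Delta)\cap D=\emptyset$, which refutes $\Gamma\der_\Mt\Delta$ and, by contraposition, yields both $\der_\Mt\subseteq\der_{R_\Mt^{\widetilde{\mathcal{S}}}}$ and the existence of analytic proofs for all consequences. By a standard Lindenbaum-style saturation argument one extends the underivable pair to a maximal one $(\Gamma^\ast,\Delta^\ast)$ partitioning $\Theta$ with $\Gamma\subseteq\Gamma^\ast$, $\Delta\subseteq\Delta^\ast$; maximality plus underivability make $\Gamma^\ast$ closed under backward application of $\Theta$-instances of the rules (if the premises of such an instance lie in $\Gamma^\ast$, some conclusion lies in $\Gamma^\ast$). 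Then for each $B\in\sub(\Gamma\cup\Delta)$ I would show there is a \emph{unique} $x_B\in V$ with $\Omega_{x_B}(B)\subseteq\Gamma^\ast$ and $\mho_{x_B}(B)\subseteq\Delta^\ast$: uniqueness is immediate from separation, while existence is exactly what the rules $R_\exists$ buy us — if no such $x_B$ existed, one could assemble witnessing separators into some $\mho^\ast_X$ and $\Omega^\ast_{V\setminus X}$ producing an $R_\exists$-instance whose premises lie in $\Gamma^\ast$ but whose conclusions lie entirely in $\Delta^\ast$, contradicting closure. Setting $v(B)=x_B$ defines a map on the subformula-closed set $\sub(\Gamma\cup\Delta)$; the rules $R_\Sigma$ force $v(\conn(B_1,\dots,B_k))\in\conn_\Mt(v(B_1),\dots,v(B_k))$ there, and the rules $R_{\mathcal{T}}$ force $v(\sub(\Gamma\cup\Delta))\in\mathcal{T}_\Mt$, so by the extension result for PNmatrices recalled in Section~\ref{sec2} the map $v$ extends to $\bar v\in\Val_\Mt$. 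Finally the rules $R_{\mathsf{D}}$ force $x_A\in D$ when $A\in\Gamma\subseteq\Gamma^\ast$ and $x_A\notin D$ when $A\in\Delta\subseteq\Delta^\ast$, giving the required $\bar v$; since every rule instance invoked involves only formulas of $\Theta=\sub_{\mathcal{S}}(\Gamma\cup\Delta)$, the resulting proofs are $\mathcal{S}$-analytic.

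The routine half is soundness. The real work is the completeness/analyticity half, and within it the two places where the rule sets genuinely interact with the combinatorics of $\widetilde{\mathcal{S}}$: first, using $R_\exists$ to show the partition of $\Theta$ locally determines exactly one admissible truth value at each subformula (coherence of the $\Omega/\mho$ data), and second — the step that truly needs partiality to be accommodated — using $R_{\mathcal{T}}$ to upgrade the partial, truth-table-respecting assignment on $\sub(\Gamma\cup\Delta)$ to an actual $\Mt$-valuation, since for PNmatrices a locally coherent assignment need not extend unless its image avoids every obstruction set lying outside $\mathcal{T}_\Mt$. Confining every rule instance used to $\Theta=\sub_{\mathcal{S}}(\Gamma\cup\Delta)$ is precisely what turns the completeness argument into the analyticity statement.
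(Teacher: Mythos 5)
The paper does not actually prove this theorem --- it imports it verbatim from~\cite{wollic19} --- so there is no in-paper proof to compare against; judged on its own, your reconstruction is correct and follows exactly the standard route one finds in the cited source: rule-by-rule soundness via the fact that the partition $(\Omega_x,\mho_x)$ characterizes $x$, and a Lindenbaum-style maximal-pair construction on $\sub_{\mathcal{S}}(\Gamma\cup\Delta)$ for completeness, with $R_\exists$ giving existence and uniqueness of a value at each subformula, $R_\Sigma$ enforcing the truth-tables, $R_{\mathcal{T}}$ keeping the image inside $\mathcal{T}_\Mt$ so that the partial assignment extends to a total refinement (hence to an $\Mt$-valuation), and $R_{\mathsf{D}}$ sorting designated from undesignated formulas. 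You correctly isolate the two non-routine points (coherence via $R_\exists$ and extendability via $R_{\mathcal{T}}$), and confining all rule instances to $\sub_{\mathcal{S}}(\Gamma\cup\Delta)$ is indeed what yields $\mathcal{S}$-analyticity.
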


It is worth understanding the role of each of the rules proposed, as they fully capture the behaviour of $\Mt$. Namely, $R_\exists$ allows one to exclude combinations of separators that do not correspond to truth-values. Actually, in examples where the separators $S$ are such that, in all cases, $S_\Mt(z)\subseteq D$ or 
$S_\Mt(z)\subseteq V\setminus{D}$, one can always in practice set up the discriminator in a way that makes all $R_\exists$ rules trivial, in the sense that they will necessarily have a formula that appears both as a premise and as a conclusion. Rules in $R_{\mathsf{D}}$ distinguish those combinations of separators that characterize designated values from those that characterize undesignated values. Again, in practice, whenever $\Mt$ has both designated and undesignated values and $S(p)=p$ is used to separate them, all $R_{\mathsf{D}}$ rules are also trivial. The most operational rules are perhaps $R_\Sigma$, as they completely determine the interpretation of connectives in $\Mt$. The rules in $R=R_\exists\cup R_{\mathsf{D}}\cup R_\Sigma$ already guarantee that $\der_R{=}\der_\Mt$, but not necessarily analyticity. The rules in  $R_{\mathcal{T}}$ are crucial in proving analyticity (they are already derivable from the previous rules, but with seemingly non-analytic proofs). Indeed, rules in $R_{\mathcal{T}}$ guarantee that one deals with combinations of separators that correspond to values taken within a total refinement of $\Mt$. \\

In order to be able to apply this general result to obtain analytic calculi for the logics characterized by the PNmatrices produced by the method we have devised in Subsection~\ref{sec32}, we need to make sure that the PNmatrices are monadic. Of course, not every PNmatrix is monadic, but we can easily show that our construction preserves monadicity.

\begin{proposition}\label{monadic}
Let $\Mt=\tuple{V,D,\cdot_\Mt}$ be a $\Sigma$-PNmatrix and $\Ax\subseteq L_\Sigma(P)$ that fulfill the conditions of Theorem~\ref{theconstruction}.
If $\Mt$ is monadic then $\Mt^\sharp_\Ax$ is also monadic.
 \end{proposition}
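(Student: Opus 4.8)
The plan is to verify the monadicity condition for $\Mt^\sharp_\Ax$ directly, producing for each pair of distinct truth-values a monadic separator built out of a separator of $\Mt$. Throughout I would rely on three ingredients already available: the completeness direction of Theorem~\ref{theconstruction}, i.e.\ that for $v\in\Val_\Mt^\Ax$ the map $B\mapsto f^B_v$ is a $\Mt^\sharp_\Ax$-valuation; the fact that $\Mt^\sharp_\Ax$ is a rexpansion of $\Mt$ via $\widetilde{\mathcal{E}}(h)=h(\varepsilon)$, so that $\widetilde{\mathcal{E}}$ preserves and reflects designation ($h\in D^\sharp_\Ax$ iff $h(\varepsilon)\in D$) and $\widetilde{\mathcal{E}}(S_{\Mt^\sharp_\Ax}(h))\subseteq S_\Mt(h(\varepsilon))$ for every $S\in L_\Sigma(\{p\})$ and $h\in V^\sharp_\Ax$; and the look-ahead identity established inside the proof of Theorem~\ref{theconstruction}, which gives $v^\sharp(wp)(\varepsilon)=v^\sharp(p)(w)$ for $v^\sharp\in\Val_{\Mt^\sharp_\Ax}$ and $w\in\Theta_\Ax$. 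I would also use the easy general fact that $\Val_{\Mt^\sharp_\Ax}$ is closed under precomposition with substitutions.

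The first step is a preliminary observation that makes the non-emptiness clause in the separator definition automatic for $\Mt^\sharp_\Ax$. Every $h\in V^\sharp_\Ax$ equals $f^A_v$ for some $v\in\Val_\Mt^\Ax$; composing the valuation $B\mapsto f^B_v$ with the substitution sending $p$ to $A$ yields $v^\sharp\in\Val_{\Mt^\sharp_\Ax}$ with $v^\sharp(p)=h$. Hence, for every $S\in L_\Sigma(\{p\})$ and every $h\in V^\sharp_\Ax$, the set $S_{\Mt^\sharp_\Ax}(h)=\{u^\sharp(S):u^\sharp\in\Val_{\Mt^\sharp_\Ax},\,u^\sharp(p)=h\}$ is non-empty. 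So it only remains, given distinct $f,g\in V^\sharp_\Ax$, to exhibit $S\in L_\Sigma(\{p\})$ with $S_{\Mt^\sharp_\Ax}(f)\#S_{\Mt^\sharp_\Ax}(g)$.

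For the second step, since $f\neq g$ as functions on $\Theta_\Ax$, I would pick $w\in\Theta_\Ax$ with $f(w)\neq g(w)$, take (by monadicity of $\Mt$) a separator $T\in L_\Sigma(\{p\})$ with, say, $T_\Mt(f(w))\subseteq D$ and $T_\Mt(g(w))\subseteq V\setminus D$, and set $S:=T[p:=wp]$, the formula obtained by substituting the formula $wp$ for the variable $p$ in $T$ (when $w=\varepsilon$ this is just $T$). To check $S$ works, take any $h\in V^\sharp_\Ax$ and any $h'\in S_{\Mt^\sharp_\Ax}(h)$: then $h'=v^\sharp(S)$ for some $v^\sharp\in\Val_{\Mt^\sharp_\Ax}$ with $v^\sharp(p)=h$, and writing $\sigma=[p:=wp]$ we get $h'=(v^\sharp\circ\cdot^\sigma)(T)\in T_{\Mt^\sharp_\Ax}(v^\sharp(wp))$ since $v^\sharp\circ\cdot^\sigma\in\Val_{\Mt^\sharp_\Ax}$ sends $p$ to $v^\sharp(wp)$. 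Projecting with $\widetilde{\mathcal{E}}$ and applying the look-ahead identity, $h'(\varepsilon)\in T_\Mt(v^\sharp(wp)(\varepsilon))=T_\Mt(v^\sharp(p)(w))=T_\Mt(h(w))$. With $h=f$ this forces $h'(\varepsilon)\in D$, hence $h'\in D^\sharp_\Ax$; with $h=g$ it forces $h'(\varepsilon)\notin D$, hence $h'\notin D^\sharp_\Ax$. Thus $S_{\Mt^\sharp_\Ax}(f)\subseteq D^\sharp_\Ax$ and $S_{\Mt^\sharp_\Ax}(g)\subseteq V^\sharp_\Ax\setminus D^\sharp_\Ax$, both non-empty by the preliminary observation, so $S$ separates $f$ and $g$; as the pair was arbitrary, $\Mt^\sharp_\Ax$ is monadic.

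The main obstacle, and the only point requiring more than bookkeeping, is the case $w\neq\varepsilon$: a separator of $\Mt$ for the ``face values'' $f(\varepsilon)$ and $g(\varepsilon)$ need not exist since these may coincide, so one must reach into the look-ahead coordinates. The correctness of the prefixing trick $S=T[p:=wp]$ hinges precisely on the identity $v^\sharp(wp)(\varepsilon)=v^\sharp(p)(w)$ extracted from the proof of Theorem~\ref{theconstruction}, combined with the $\widetilde{\mathcal{E}}$-projection inclusion for rexpansions; making those two line up is where the hypotheses of Theorem~\ref{theconstruction} are actually consumed.
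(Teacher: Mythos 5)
Your proposal is correct and follows essentially the same route as the paper: pick a coordinate $w\in\Theta_\Ax$ where the two functions differ, lift a separator $T$ of $\Mt$ for those coordinate values to $T(w\,p)$, establish non-emptiness via the valuations $B\mapsto f^B_v$ from the completeness part of Theorem~\ref{theconstruction}, and conclude separation by projecting with $\widetilde{\mathcal{E}}$ and the look-ahead identity $v^\sharp(w\,p)(\varepsilon)=v^\sharp(p)(w)$. The only cosmetic difference is that you make the closure of $\Val_{\Mt^\sharp_\Ax}$ under substitution explicit where the paper leaves it implicit.
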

\proof{Let $f_{v_1}^{A_1},f_{v_2}^{A_2}\in V^\sharp_\Ax$ with $f_{v_1}^{A_1}\neq f_{v_2}^{A_2}$. This means that there exists $w\in\Theta_\Ax$ such that $x_1=f_{v_1}^{A_1}(w)\neq f_{v_2}^{A_2}(w)=x_2$. Given that $\Mt$ is monadic, we know that there exists $S\in L_\Sigma(\{p\})$ which separates $x_1$ from $x_2$ in $\Mt$, that is, $S_\Mt(x_1)\#S_\Mt(x_2)$. We show that $R(p)=S(w\,p)$ separates $f_{v_1}^{A_1}$ from $f_{v_2}^{A_2}$ in $\Mt^\sharp_\Ax$.

Given $f_v^B\in V^\sharp_\Ax$ we know (from the completeness part of the proof of Theorem~\ref{theconstruction}) that $v^\sharp(C)=f_v^C$ for each $C\in L_\Sigma(P)$ defines a valuation $v^\sharp\in \Val_{V^\sharp_\Ax}$. Easily, then, $v^\sharp(R(B))\in R_{\Mt^\sharp_\Ax}(v^\sharp(B))=R_{\Mt^\sharp_\Ax}(f_v^B)$, and therefore $R_{\Mt^\sharp_\Ax}(f_v^B)\neq\emptyset$.

In order to show that $R_{\Mt^\sharp_\Ax}(f_{v_1}^{A_1})\# R_{\Mt^\sharp_\Ax}(f_{v_2}^{A_2})$ we just need to show that $R_{\Mt^\sharp_\Ax}(f_{v_1}^{A_1})(\varepsilon)\subseteq S_\Mt(x_1)\# S_\Mt(x_2)\supseteq R_{\Mt^\sharp_\Ax}(f_{v_2}^{A_2})(\varepsilon)$, and use the fact that in a rexpansion designated values are preserved and reflected. 

Take $i\in\{1,2\}$ and any valuation $v^\sharp\in \Val_{\Mt^\sharp_\Ax}$ with $v^\sharp(p)=f_{v_i}^{A_i}$. 
We have that $v^\sharp(R(p))=v^\sharp(S(w\,p))\in S_{\Mt^\sharp_\Ax}(v^\sharp(w\,p))$. Thus, it follows that 
$v^\sharp(R(p))(\varepsilon)\in S_{\Mt^\sharp_\Ax}(v^\sharp(w\,p))(\varepsilon)\subseteq S_\Mt(v^\sharp(w\,p)(\varepsilon))=S_\Mt(v^\sharp(p)(w))=S_\Mt(f_{v_i}^{A_i}(w))=S_\Mt(x_i)$.
\qed}

Note that this result encompasses the \emph{sufficient expressiveness} preservation result of~\cite{taming}, as the two-valued Boolean matrix is trivially separable using just $S(p)=p$.\smallskip

We now illustrate the powerful result of Theorem~\ref{analyticity} by producing suitably analytic calculi for the resulting logics in each of the examples of Section~\ref{sec4}. In some cases, we also take the opportunity to illustrate the (obvious) notion of proof in multiple-conclusion calculi.
In each of the examples, rules $R_\exists$ and $R_\mathsf{D}$ are omitted, as they are all trivial, as discussed before. We refer the reader to~\cite{synt,wollic19} for further details.

 \begin{bla1}\em
 
In Example~\ref{pnegpq} we have obtained a four-valued PNmatrix characterizing the strengthening of the logic of classical implication with the additional axiom $p_1\to(\neg p_1\to p_2)$. Easily, ${\mathcal S}=\{p,\neg p\}$ is a corresponding set of monadic separators, which yields the discriminator $\widetilde{\mathcal{S}}$ with $\widetilde{\mathcal{S}}_x={\mathcal S}$ for each truth-value $x$. This gives rise to the following partitions.

  \begin{center}
 \begin{tabular}{c|c|c}
 $x$ & $\Omega_x$ & $\mho_x$ \\
 \hline
 $00$ & $\emptyset$ & $\{p,\neg p\}$ \\
 $01$ & $\{\neg p\}$ & $\{ p\}$ \\
 $10$ & $\{p\}$ & $\{\neg p\}$  \\
 $11$  & $\{p,\neg p\}$ & $\emptyset$
\end{tabular}
\end{center}

Using Theorem~\ref{analyticity}, the following rules constitute an ${\mathcal S}$-analytic calculus $R$ for the logic.
  $$\frac{}{p\,,\,p\to q}\ _{r_1}\qquad\quad \frac{p\,,\,p\to q}{q}\ _{r_{2}}\qquad\quad \frac{q}{p\to q}\ _{r_{3}}
 \qquad\quad\frac{p \,,\, \neg p}{ q }\ _{r_{\Exp}}$$
 
After simplifications, the rules $r_{1}$--$r_{3}$ correspond to $R_\to$, and $r_{\Exp}$ to $R_{\mathcal{T}}$ with $X=\{00,11\}$, $X=\{01,11\}$, and $X=\{10,11\}$.

For illustration, we next depict an analytic proof of $\der_R p_1\to(\neg p_1\to p_2)$. Note that rules with multiple conclusions give rise to branching in the proof-tree, which makes it necessary for the target formula $p_1\to(\neg p_1\to p_2)$ to appear in all the branches.\\
\begin{center}

\scalebox{.85}{
\frame{\begin{tikzpicture}
\tikzset{level distance=49pt}
\tikzset{sibling distance=5pt}
 \Tree[.  $\emptyset$
  \edge node[auto=right] {$_{r_1}$};
  [.$p_1$ 
      \edge node[auto=right] {$_{r_1}$};
  [.$\neg p_1$ 
    \edge node[auto=right] {$_{r_{\Exp}}$}; 
   $p_1\to(\neg p_1\to p_2)$
  ]
   [.$\neg p_1\to p_2$
     \edge node[auto=right] {$_{r_3}$}; 
     $p_1\to(\neg p_1\to p_2)$
  ]
  ] 
  [.$p_1\to(\neg p_1\to p_2)$
    ]  
   ] 
\end{tikzpicture}}
}

 \end{center}
%
\hfill$\triangle$
\end{bla1}

\begin{bla2}\em
 In Example~\ref{pnegpnegq} we have obtained a four-valued PNmatrix characterizing the strengthening of the logic of classical implication with the additional axiom $p_1\to(\neg p_1\to \neg p_2)$. Easily, one can reuse the set of monadic separators, and the discriminator, from the previous example.
 
Using Theorem~\ref{analyticity}, an ${\mathcal S}$-analytic calculus $R$ for the logic can be obtained by replacing the rule $r_{\Exp}$ of Example~\ref{pnegpq} with the rule below.  $$\qquad\qquad\qquad\frac{p \,,\, \neg p}{ \neg q }\ _{r_{\Exp_\neg}}\qquad \qquad \qquad $$

Expectedly, rule $r_{\Exp_\neg}$ corresponds to $R_{\mathcal{T}}$ with $X=\{00,11\}$, and $X=\{10,11\}$.
\hfill$\triangle$
\end{bla2}

\begin{bla3}\em
  
 In Example~\ref{clun} we have obtained a four-valued Nmatrix characterizing $\mathcal{C}_{\min}$, the strengthening of the logic $\mathcal{CL}u\mathcal{N}$ with the additional axiom $\neg\neg p_1\to p_1$. Easily, ${\mathcal S}=\{p,\neg p,\neg\neg p\}$ is a corresponding set of monadic separators, which allows for the discriminator $\widetilde{\mathcal{S}}$ with $\widetilde{\mathcal{S}}_{010}=\{p\}$, $\widetilde{\mathcal{S}}_{101}=\{p,\neg p\}$, and $\widetilde{\mathcal{S}}_{110}=\widetilde{\mathcal{S}}_{111}=\{p,\neg p,\neg \neg p\}$, giving rise to the following partitions.

  \begin{center}
 \begin{tabular}{c|c|c}
 $x$ & $\Omega_x$ & $\mho_x$ \\
 \hline
 $010$ & $\emptyset$ & $\{p\}$ \\
 $101$ & $\{p\}$ & $\{\neg p\}$ \\
 $110$ & $\{p,\neg p\}$ & $\{\neg\neg p\}$  \\
 $111$  & $\{p,\neg p,\neg\neg p\}$ & $\emptyset$
\end{tabular}
\end{center}

Using Theorem~\ref{analyticity}, the following rules constitute an ${\mathcal S}$-analytic calculus $R$ for $\mathcal{C}_{\min}$.
 $$\frac{}{p\,,\,p\to q}\ _{r_1}\qquad \frac{p\,,\,p\to q}{q}\ _{r_{2}}\qquad \frac{q}{p\to q}\ _{r_{3}}\qquad\frac{ }{p \,,\, \neg p}\ _{r_4}\qquad\frac{ \neg \neg p}{p}\ _{r_5}$$
 
 After simplifications, the rules $r_{1}$--$r_{3}$ correspond to $R_\to$, and $r_4,r_5$ to $R_\neg$.\smallskip

We then obtained a three-valued Nmatrix characterizing the strengthening of $\mathcal{C}_{\min}$ with the axiom $ p_1\to\neg\neg p_1$.
Easily, ${\mathcal S}'=\{p,\neg p\}$ is a corresponding set of monadic separators, which allows for the discriminator $\widetilde{\mathcal{S}'}$ with $\widetilde{\mathcal{S}'}_{01}=\{p\}$, and $\widetilde{\mathcal{S}}_{10}=\widetilde{\mathcal{S}}_{11}=\{p,\neg p\}$, giving rise to the following partitions.

  \begin{center}
 \begin{tabular}{c|c|c}
 $x$ & $\Omega_x$ & $\mho_x$ \\
 \hline
 $01$ & $\emptyset$ & $\{p\}$ \\
 $10$ & $\{p\}$ & $\{\neg p\}$ \\
 $11$  & $\{p,\neg p\}$ & $\emptyset$
\end{tabular}
\end{center}

Using Theorem~\ref{analyticity}, an ${\mathcal S}'$-analytic calculus $R'$ for the logic can be obtained by joining to the calculus $R$ obtained above the new $ R_{\neg}$ rule:
 $$\frac{ p}{ \neg \neg p}$$
 \hfill$\triangle$
\end{bla3}

\begin{bla4}\em
In Example~\ref{agata} we have obtained a four-valued PNmatrix characterizing the strengthening of positive classical logic with axioms
$$p_1\ou \neg p_1$$ $$p_1\to(\neg p_1\to(\circ p_1 \to p_2))$$ $$\circ p_1\ou (p_1 \e \neg p_1)$$ $$\circ p_1\to\circ(p_1\e p_2)$$ $$(\neg p_1\ou \neg p_2)\to\neg(p_1\e p_2)$$

It is easy to see that ${\mathcal S}=\{p,\neg p,\circ p\}$ is a corresponding set of monadic separators, which allows for the discriminator $\widetilde{\mathcal{S}}$ with $\widetilde{\mathcal{S}}_{011}=\{p\}$,  $\widetilde{\mathcal{S}}_{101}=\{p,\neg p\}$, and $\widetilde{\mathcal{S}}_{110}=\widetilde{\mathcal{S}}_{111}=\{p,\neg p,\circ p\}$. This gives rise to the following partitions.
  
  \begin{center}
 \begin{tabular}{c|c|c}
 $x$ & $\Omega_x$ & $\mho_x$ \\
 \hline
 $011$ & $\emptyset$ & $\{p\}$ \\
 $101$ & $\{p\}$ & $\{\neg p\}$ \\
 $110$ & $\{p,\neg p\}$ & $\{\circ p\}$  \\
 $111$  & $\{p,\neg p,\circ p\}$ & $\emptyset$
\end{tabular}
\end{center}

Using Theorem~\ref{analyticity}, the following rules constitute an ${\mathcal S}$-analytic calculus $R$ for the logic.
    $$\frac{p\,,\, q}{\;p\e q\;}\ _{r_1}\quad 
 \frac{\;p\e q\;}{p}\ _{r_2}\quad \frac{\;p\e q\;}{q}\ _{r_3} 
 \quad \frac{\neg p}{\;\neg(p\e q)\;}\ _{r_4}  $$
   $$\frac{p}{\;p\ou q\;}\ _{r_5}\quad \frac{q}{\;p\ou q\;}\ _{r_6}\quad 
      \frac{\;p\ou q\;}{\;p\,,\, q\;}\ _{r_{7}}\quad\frac{\;p\,,\, p\to q\;}{q}\ _{r_{8}}\quad
    \frac{q}{\;p\to q\;}\ _{r_{9}}\quad 
   \frac{}{\;p\,,\, p\to q\;}\ _{r_{10}}
  $$ 
    $$\frac{}{\;p\,,\,\neg p\;}\ _{r_{11}}\quad\frac{}{\;p\,,\, \circ p\;}\ _{r_{12}}\quad \frac{p}{\;\neg p\,,\, \circ p\;}\ _{r_{13}} \quad\frac{\;p\,,\, q\,,\,\neg q\;}{\neg p}\ _{r_{14}}\quad \frac{\;p\,,\,\neg p\,,\, \circ p\;}{q}\ _{r_{15}}$$

After simplifications, the rules $r_{1}$--$r_{4}$ correspond to $R_\e$, $r_{5}$--$r_{7}$ to $R_\ou$, 
$r_{8}$--$r_{10}$ to $R_\to$,
$r_{11}$ to $R_\neg$, $r_{12}$ and $r_{13}$ to $R_\circ$. Finally, 
$r_{14}$ and $r_{15}$ result from $R_{\mathcal{T}}$, with $X=\{101,110\}$ and $X=\{111,011\}$, respectively.
%
%

%
 
Sample proofs, namely for some of the axioms, with a very similar calculus can be found in~\cite{wollic19}.
\hfill$\triangle$
\end{bla4}

\begin{bla5}\em
In Example~\ref{nelson} we have obtained a four-valued twist-structure characterizing the addition of a paraconsistent Nelson-like strong negation to positive classical logic. Easily, ${\mathcal S}=\{p,{\sim{p}}\}$ is a corresponding set of monadic separators, yielding the discriminator $\widetilde{\mathcal{S}}$ with $\widetilde{\mathcal{S}}_x=\mathcal{S}$ for each truth-value $x$. This gives rise to the following partitions.

  \begin{center}
 \begin{tabular}{c|c|c}
 $x$ & $\Omega_x$ & $\mho_x$ \\
 \hline
 $00$ & $\emptyset$ & $\{p,\sim p\}$ \\
 $01$ & $\{\sim p\}$ & $\{ p\}$ \\
 $10$ & $\{p\}$ & $\{\sim p\}$  \\
 $11$  & $\{p,\sim p\}$ & $\emptyset$
\end{tabular}
\end{center}

Using Theorem~\ref{analyticity}, the following rules constitute an ${\mathcal S}$-analytic calculus $R$ for the logic.
%
%
{
  $$
 \frac{\;p\e q\;}{p}\ _{r_1}\quad \frac{\;p\e q\;}{q}\ _{r_2}
 \quad \frac{p\,,\, q}{\;p\e q\;}\ _{r_3} \qquad \frac{ \sim p }{\sim (p\e q)}\ _{r_4} \qquad \frac{  \sim q }{\sim (p\e q)}\ _{r_5}\quad 
  \frac{\sim (p\e q)}{ \sim p\,,\, \sim q }\ _{r_6}
    $$
 $$ \frac{p}{\;p\ou q\;}\ _{r_7}\quad \frac{q}{\;p\ou q\;}\ _{r_8}\quad 
      \frac{\;p\ou q\;}{\;p\,,\, q\;}\ _{r_{9}}
       \quad  \frac{\sim (p\ou q)}{  \sim q }\ _{r_{10}}  \quad  \frac{\sim (p\ou q)}{  \sim q }\ _{r_{11}}\quad   
 \frac{ \sim p\,,\, \sim q }{\sim (p\ou q)}\ _{r_{12}}$$
   $$\quad\frac{\;p\,,\, p\to q\;}{q}\ _{r_{13}}\quad
    \frac{q}{\;p\to q\;}\ _{r_{14}}\quad 
   \frac{}{\;p\,,\, p\to q\;}\ _{r_{15}}$$
   $$
    \frac{\sim (p\to q)}{p }\ _{r_{16}}\quad
      \frac{\sim (p\to q)}{\sim q }\ _{r_{17}}\quad 
  \frac{p \,,\, \sim q }{\sim (p\to q)}\ _{r_{18}} 
  $$ 
 $$   \frac{ p }{\sim \sim p}\ _{r_{19}}\qquad \frac{ \sim \sim p }{ p}\ _{r_{20}} $$
}

After simplifications, the rules $r_{1}$--$r_{6}$ correspond to $R_\e$, $r_{7}$--$r_{12}$ to $R_\ou$, 
$r_{13}$--$r_{18}$ to $R_\to$,
$r_{19}$ and $r_{18}$ to $R_\sim$.

A strengthening with an additional (explosion) axiom $\sim p_1\to(p_1\to p_2)$ was then shown to be characterized by a four-valued Pmatrix. 
It is straightforward to see that one can reuse the set of monadic separators, and the discriminator, from above.
Using Theorem~\ref{analyticity}, an ${\mathcal S}$-analytic calculus $R'$ for the logic can be obtained by simply adding to $R$ the new $R_{\mathcal{T}}$ rule
$$\frac{p\,,\sim p}{q}$$
obtained by considering $X=\{11,00\}$,  $X=\{11,01\}$, and $X=\{11,10\}$.
\hfill$\triangle$
\end{bla5}

\begin{bla6}\em
In Example~\ref{swap} we obtained a four-valued Nmatrix characterizing the non-normal modal logic of Kearns and Ivlev~\cite{kearns,Ivlev}. It is not difficult to check (namely, using Proposition~\ref{monadic}) that ${\mathcal S}=\{p,\square p, \square\neg p\}$ is a set of monadic separators
for the Nmatrix. This allows for the discriminator $\widetilde{\mathcal{S}}$ with $\widetilde{\mathcal{S}}_{000}=\widetilde{\mathcal{S}}_{001}=\{p,\square\neg p\}$, and $\widetilde{\mathcal{S}}_{100}=\widetilde{\mathcal{S}}_{110}=\{p,\square p\}$, which gives rise to the following partitions.
  \begin{center}
 \begin{tabular}{c|c|c}
 $x$ & $\Omega_x$ & $\mho_x$ \\
 \hline
 $000$ & $\emptyset$ & $\{p,\square\neg p\}$ \\
 $001$ & $\{\square\neg p\}$ & $\{ p\}$ \\
 $100$ & $\{p\}$ & $\{\square p\}$  \\
 $110$  & $\{p,\square p\}$ & $\emptyset$
\end{tabular}
\end{center}
Using Theorem~\ref{analyticity}, we get an ${\mathcal S}$-analytic calculus $R$ for the logic.
{
 $$\frac{}{p\,,\,p\to q}\ _{r_1}\qquad\quad \frac{p\,,\,p\to q}{q}\ _{r_{2}}\qquad \frac{q}{p\to q}\ _{r_{3}}\qquad \frac{p\,,\, \neg p}{}\ _{r_4} \qquad  \frac{}{p\,,\, \neg p}\ _{r_5} 
 $$
  $$\frac{\square(p\to q) \,,\, \square p}{\square q}\ _{k}
  \qquad
   \frac{\square(p\to q) \,,\, \square \neg q}{\square\neg p}\ _{k_1}
   \qquad
   \frac{\square p \,,\, \square\neg q}{\square\neg(p\to q)}\ _{k_2}
    $$
  $$
   \frac{\square\neg p}{\square(p\to q)} \ _{m_1}\qquad
    \frac{\square q}{\square(p\to q)} \ _{m_2}\qquad
     \frac{\square\neg (p\to q)}{\square \neg q} \ _{m_3}\qquad
     \frac{\square\neg (p\to q)}{\square p} \ _{m_4}   
    $$
  $$
   \frac{\square p}{p} \ _{T}\qquad
    \frac{\square p}{\square \neg \neg p} \ _{dn_1} \qquad
      \frac{\square \neg \neg p}{\square p} \ _{dn_2} 
    $$    
    
}

After simplifications, the rules $r_{1}$--$r_{3}$, $k$, $k_1$--$k_2$, $m_1$--$m_4$ correspond to $R_\to$, $r_{4}$--$r_{5}$ and $dn_1$--$dn_2$ to $R_\neg$, 
and $T$ to $R_\square$. It is interesting to note that rules $r_{1}$--$r_{5}$ characterize classical logic, and the remaining rules are in a one-to-one correspondence with the axioms considered (see~\cite{coniglioswap}). The only less obvious case is the rule $k_2$. For this reason we present below an analytic proof of the corresponding axiom $K_2=\neg\square\neg(p\to q)\to(\square p\to\neg\square\neg q)$, i.e.,  $\der_R  K_2$. Note that $K_2$ is obtained in all the branches of the proof-tree, except for the leftmost one, which is discontinued due to rule $r_4$ (as signaled by the use of $\ast$).

\begin{center}
%
\scalebox{.85}{
\frame{\begin{tikzpicture}
\tikzset{level distance=49pt}
\tikzset{sibling distance=5pt}
 \Tree[.  $\emptyset$
  \edge node[auto=right] {$_{r_1}$};
  [.$\neg\square\neg (p\to q)$ 
      \edge node[auto=right] {$_{r_1}$};
  [.$\square p$ 
    \edge node[auto=right] {$_{r_5}$};
  [.$\square \neg q$ 
      \edge node[auto=right] {$_{k_2}$};
  [.$\square\neg (p\to q)$ 
    \edge node[auto=right] {$_{r_4}$}; 
    $\ast$
  ] 
  ] 
  [.$\neg \square \neg q$
   \edge node[auto=right] {$_{r_3}$}; 
   [.$\square p\to\neg \square\neg q$
   \edge node[auto=right] {$_{r_3}$}; 
  $K_2$
    ]  
    ]  
  ]
   [.$\square p\to\neg \square\neg q$
     \edge node[auto=right] {$_{r_3}$}; 
     $K_2$
  ]
  ] 
  [.$K_2$
    ]  
   ] 
\end{tikzpicture}}
}
 \end{center}
\hfill$\triangle$
\end{bla6}

\begin{bla7}\em

%
%
%
%

In Example~\ref{L53} we have obtained the usual three-valued {\L}ukasiewicz's matrix (by strengthening the five-valued  {\L}ukasiewicz logic with an additional axiom). Easily, ${\mathcal S}=\{p,\neg p\}$ is a set of monadic separators, yielding the  discriminator $\widetilde{\mathcal{S}}$ with 
$\widetilde{\mathcal{S}}_{0}=\widetilde{\mathcal{S}}_{\frac{1}{2}}=\{p,\neg p\}$, and $\widetilde{\mathcal{S}}_{1}=\{p\}$, which gives rise to the following partitions.

  \begin{center}
 \begin{tabular}{c|c|c}
 $x$ & $\Omega_x$ & $\mho_x$ \\
 \hline
 $0$ & $\{\neg p\}$ & $\{p\}$ \\
 $\frac{1}{2}$ & $\emptyset$ & $\{p,\neg p\}$  \\
 $1$  & $\{p\}$ & $\emptyset$
\end{tabular}
\end{center}

%

Using Theorem~\ref{analyticity}, the following rules constitute an ${\mathcal S}$-analytic calculus $R$ for $\mathcal{L}_3$.

 $$ \frac{p\,,\, \neg p}{}\ _{r_1} \qquad\frac{p}{ \neg \neg  p}\ _{r_2} \qquad \quad \frac{ \neg \neg p}{p}\ _{r_3} $$

$$\frac{}{p\,,\,p\to q\,,\,\neg q}\ _{r_4}\qquad\quad \frac{p\,,\,p\to q}{q}\ _{r_{5}}\qquad \frac{q}{p\to q}\ _{r_{6}}$$
  
 $$ \frac{ \neg p}{p\to q}\ _{r_7}   \qquad \quad  \frac{\neg q \,,\, p\to q   }{\neg p }\ _{r_8} $$

$$ \frac{ \neg (p\to q)}{p}\ _{r_9}   \qquad \quad  \frac{ \neg (p\to q)}{\neg q}\ _{r_{10}}  \qquad \quad  \frac{p\,,\,\neg q}{ \neg (p\to q)}\ _{r_{11}} $$

After simplifications, the rules $r_{1}$--$r_{3}$ correspond to $R_\neg$, and $r_{4}$--$r_{11}$ to $R_\to$. 
For illustration, we depict an analytic proof of the added axiom $A=((p \to \neg p) \to p) \to p)$, i.e., $\der_R A$.\\

%
%
%
 
\begin{center}

\scalebox{.85}{
\frame{\begin{tikzpicture}
\tikzset{level distance=49pt}
\tikzset{sibling distance=5pt}
 \Tree[. $\emptyset$
  \edge node[auto=right] {$_{r_4}$};
  [.$(p\to \neg p)\to p$ 
  \edge node[auto=right] {$_{r_4}$};
   [.$p$ 
   \edge node[auto=right] {$_{r_6}$};
   $A$
   ]
   [.$p\to \neg p$ 
   \edge node[auto=right] {$_{r_5}$};
   [.$p$ 
   \edge node[auto=right] {$_{r_6}$};
   $A$
   ]
]
   [.$\neg \neg p$ 
   \edge node[auto=right] {$_{r_3}$};
   [.$p$ 
   \edge node[auto=right] {$_{r_6}$};
   $A$
   ]
   ] 
  ] 
 [.$A$
   ]  
  [.$\neg p$
   \edge node[auto=right] {$_{r_{6}}$};
   [.$p\to \neg p$
   \edge node[auto=right] {$_{r_{11}}$};
   [.$\neg((p\to \neg p)\to p)$
    \edge node[auto=right] {$_{r_7}$};
   $A$
   ]
   ]
   ]  ] 
\end{tikzpicture}}
}
\end{center}

\hfill$\triangle$
\end{bla7}

\section{Concluding remarks}\label{sec6}

In this paper we have shown that rexpansions of (P)(N)matrices are a universal tool for explaining the strengthening of logics with additional axioms. 
This does not come as a surprise, as non-determinism and partiality are well known for enabling a plethora of compositionality results in logic. Our general method in Theorem~\ref{flat} is not effective, but it still brings about some interesting phenomena, such as the possibility of building a denumerable semantics for intuitionistic propositional logic (where the precise roles of non-determinism and partiality need further clarification). More practical, though, is our less general method in Theorem~\ref{theconstruction} as, despite the necessary restrictions on its scope, it brings about an effective method for producing finite semantic characterizations whenever starting from a finite basis. Our results cover a myriad of examples in the literature, namely those motivated by the study of logics of formal inconsistency, which played an important role in the work of Arnon Avron. Besides, our effective method, while more general and incremental, is fully inspired by the fundamental ideas in~\cite{taming}. It is also worth noting that our results apply not just to the Tarskian notion of consequence relation, but also to the multiple-conclusion case. An obvious topic for further work is to provide a usable tool implementing these methods.

Other opportunities for further research, aimed at generalizing the results presented, would be to find more general syntactic conditions on the set of allowed axioms. For instance, the number of sentential variables occurring in an axiom seems to be easy to flexibilize by artificially extending the logic with big-arity connectives. Beyond axioms, one could think even further away, and consider strengthening logics with fully-fledged inference rules. In any case, such extensions will expectedly need more sophisticated techniques than the simple idea behind \emph{look-aheads}.

These results reinforce the need to better understand the conditions under which two (P)(N)matrices characterize the same logic. This is by no means a trivial question, but we believe that the notion of rexpansion can be a useful tool in that direction.\smallskip

If not for its own sake, this line of research aimed at providing effective semantic characterizations for combined logics is quite well justified by 
another recurring goal of many of the papers that inspired us: ultimately obtaining suitably analytic calculi for the resulting logics.

\bibliographystyle{plain}  
\bibliography{biblio.bib}

\end{document}